\newtheorem{thm}{Theorem}
\newtheorem{theorem}{Theorem}[section]
\newtheorem{lem}[theorem]{Lemma}
\newtheorem{lemma}[theorem]{Lemma}
\newtheorem{coro}[theorem]{Corollary}
\newcommand{\al}{\alpha}
\newcommand{\be}{\beta}
\newcommand{\om}{\omega}
\newcommand{\del}{\delta}
\newcommand{\cN}{\mathcal{N}}
\newcommand{\cM}{\mathcal{M}}
\newcommand{\lc}{\big{\lceil}}
\newcommand{\rc}{\big{\rceil}}
\newcommand{\lf}{\big{\lfloor}}
\newcommand{\rf}{\big{\rfloor}}
\newcommand{\Z}{\mbox{$\mathbb Z$}}
\newcommand{\R}{\mbox{$\mathbb R$}}     
\newcommand{\A}{\mathfrak{A}}
\newcommand{\B}{\mathfrak{B}}
\begin{document}
\title{Extensions of Schur's irreducibility results}
\author{Shanta Laishram}
\email{shanta@isid.ac.in}
\address{Stat Math Unit, Indian Statistical Institute,
7 SJS Sansanwal Marg, New Delhi 110016, India}
\author[T. N. Shorey]{T. N. Shorey}
\email{shorey@math.iitb.ac.in}
\address{Department of Mathematics\\
Indian Institute of Technology Bombay, Powai, Mumbai 400076, India}
\keywords{Laguerre polynomials, irreducibility, primes}
\begin{abstract}
We prove that the generalised Laguerre polynomials
$L_{n}^{(\alpha)}(x)$ with $0\le \al\le 50$ are irreducible except for finitely
many pairs $(n, \al)$ and that these exceptions are necessary. In fact it follows
from a more general statement.
\end{abstract}

\maketitle

\section{Introduction}

For $\al\in \R$ and $n\in \Z$ with $n\ge 1$, we define the generalised
Laguerre polynomials of degree $n$ as
\begin{align*}
L_{n}^{(\alpha)}(x) = \sum_{j=0}^{n} \frac{(n+\alpha)(n-1+\alpha)\cdots (j+1+\alpha)
(-x)^{j}}{(n-j)! j!}.
\end{align*}
There is an extensive literature on Laguerre polynomials. In particular, the
irreducibility of these class of orthogonal polynomials has been well studied. The
irreducibility of $L^{(-2n-1)}_n$ proved by Filaseta and Trifonov \cite{fitr} is equivalent
to the fact that all Bessel polynomials are irreducible. Also Laguerre polynomials provide examples
of polynomials of degree $n$ with associated Galois group $A_n$ where $A_n$ is the alternating group
on $n$ symbols and the irreducibility of
$L^{(n)}_n$ proved by Filaseta, Kidd and Trifonov \cite{fkt} has been used to settle
explicitly the \emph{Inverse Galois problem} that for every $n>1$ there exists an explicit
polynomial of degree $n$ with associated Galois group $A_n$. We prove
\begin{thm}\label{Laga40}
Let $0\le \al\le 50$. Then $L^{(\al)}_n(x)$ is irreducible except when
$n=2, \al\in \{2, 7, 14, 23, 34, 47\}$ and $n=4, \al \in \{5, 23\}$ where it
has a linear factor.
\end{thm}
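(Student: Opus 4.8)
The plan is to deduce the theorem from a general irreducibility criterion for $L^{(\al)}_n$ with $\al$ a fixed non-negative integer, and then to specialise to $0\le\al\le 50$. Writing the coefficient of $x^j$ as $(-1)^j\binom{n+\al}{n-j}/j!$, I would first clear denominators by passing to the monic integer polynomial $F(x)=(-1)^n n!\,L^{(\al)}_n(x)$, whose constant term is $\pm(\al+1)(\al+2)\cdots(\al+n)$. The roots of $F$ are, up to sign, those of $L^{(\al)}_n$, so it suffices to study factorisations of $F$ over $\Q$.

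The engine is the Schur--Coleman--Filaseta method of $p$-adic Newton polygons. For a prime $p$ I would examine the lower convex hull of the points $(j,v_p(c_j))$, where $c_j$ is the coefficient of $x^j$ in $F$; since $v_p(c_j)$ combines $v_p\binom{n+\al}{n-j}$ with $v_p(n!/j!)$, a prime $p$ lying in a suitable interval near $n+\al$ produces an edge whose slope has a large denominator, and a standard lemma then forces the degree of any factor to be a sum of the horizontal lengths of such edges. Running this over a family of primes, whose existence in the required ranges is guaranteed by sharp results on primes in short intervals (Sylvester--Erd\H{o}s and Nagura-type estimates, together with lower bounds for the number of prime factors of $(\al+1)\cdots(\al+n)$ exceeding $\al$), rules out every factor of degree $k$ with $2\le k\le n/2$ once $n$ exceeds an explicit bound $n_0(\al)$. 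For each $\al\le 50$ this reduces matters to the finitely many pairs $(n,\al)$ with $n\le n_0(\al)$, which I would dispose of by direct factorisation on a computer.

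The genuinely delicate part is the small-degree factors, since these are exactly where the real exceptions sit and where the polygon bound is weakest. A linear factor corresponds to a rational root of $F$; by the rational-root theorem such a root divides $(\al+1)\cdots(\al+n)$, and substituting the candidate values into $L^{(\al)}_n$ turns the question into a finite system of divisibility constraints that I would solve explicitly, expecting it to collapse to precisely $n=2$ with $\al\in\{2,7,14,23,34,47\}$ and $n=4$ with $\al\in\{5,23\}$. For quadratic and other low-degree factors I would show, using the same prime-selection idea sharpened in the boundary range $n\approx\al$, that the Newton polygon still excludes them for all $n$ beyond a (smaller) explicit threshold, again leaving only a finite check.

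The main obstacle I anticipate is uniformity in $n$ for the low-degree factors: the polygon argument naturally eliminates balanced factorisations but degenerates for factors of degree $1$ and $2$, and the combinatorial identities controlling $v_p(c_j)$ behave irregularly when $n$ is comparable to $\al$. Obtaining prime-distribution estimates sharp enough to cover this transitional range, so that the residual computation is genuinely finite and matches the stated exceptional list exactly, is where the real work lies; the concluding machine verification is routine by comparison.
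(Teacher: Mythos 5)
Your plan for factors of degree $k\ge 2$ --- a $p$-adic criterion on the coefficient valuations driven by primes in suitable intervals, followed by a finite computation --- is in outline the same engine the paper uses (its Lemma \ref{irupdate} is precisely such a criterion, phrased as $\mathrm{ord}_p(\Delta_j)/j<1/k$ for all $j$). The genuine gap is your treatment of linear factors, which is the heart of this theorem and where all the stated exceptions live. You claim the rational-root theorem plus ``a finite system of divisibility constraints'' settles degree one, but that reduction is not finite: the theorem quantifies over \emph{all} $n$, and for each fixed $\al\le 50$ you must exclude a rational root for infinitely many $n$. No prime-distribution or Newton-polygon argument can close this case, because every such criterion needs a prime $p$ dividing $n(n+\al)$ with $p\nmid(\al+1)$ (and of suitable size), and such a prime simply fails to exist whenever $n$ and $n+\al$ are both smooth --- for instance $n$ a power of $2$ and $n+\al$ of the form $2^u3^v$, a situation that a priori can occur for arbitrarily large $n$. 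These smooth cases are exactly where the paper has to work hardest: it shows that any surviving $n$ forces all prime factors of $n(n+\al)$ into a tiny explicit set $P_a$, which turns the problem into exponential Diophantine equations (e.g.\ $3^{x}11^{y}-2^{z}=1$ when $\al=32$, or $5^{x}7^{y}-2^{z}=3$ when $\al=48$); after congruence sieving modulo $7$, $8$ and $9$ these are reduced to cubic Thue equations $X^3+AY^3=B$ and solved in PariGp, recovering stray candidates like $(n,\al)=(1024,32)$ and $(512,48)$ that are then killed by a direct $2$-adic check. Nothing in your proposal supplies this Diophantine step, and without it the linear-factor case remains open.

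A secondary but real problem of the same kind sits in your degree-two claim. You assert that a sharpened prime-selection argument excludes quadratic factors for all $n$ past a threshold, but for $k=2$ the relevant obstruction is again smoothness: one needs $P\bigl(n(n-1)(n+\al)(n+\al-1)\bigr)$ to exceed a bound depending on $\al$, and consecutive integers $N-1,N$ with all prime factors $\le 41$ exist well beyond any bound accessible to prime-counting estimates. The paper handles this not with prime distribution but with Lehmer's St{\o}rmer-type tables \cite{lehmer} of all $N$ with $P(N(N-1))$ bounded, which make the set of dangerous $n$ explicitly finite. So both of the ``degenerate'' degrees $k=1,2$ require Diophantine finiteness results (Thue equations, respectively St{\o}rmer--Lehmer tables) that your proposal replaces with prime-gap heuristics; that substitution is where the argument fails.
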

For the exceptions, we have
\begin{align*}
\begin{array}{ll}
L^{(2)}_2(x)=\frac{1}{2}(x - 2)(x - 6);  \  \  &L^{(7)}_2(x)=\frac{1}{2}(x -6)(x - 12);\\
L^{(14)}_2(x)=\frac{1}{2}(x - 12)(x -20); \ \ & L^{(23)}_2(x)=\frac{1}{2}(x - 20)(x - 30);\\
L^{(34)}_2(x)=\frac{1}{2}(x -30)(x - 42); \ \ & L^{(47)}_2(x)=\frac{1}{2}(x -42)(x - 56); \\
L^{(5)}_4(x)=\frac{1}{24}(x - 6)(x^3 - 30x^2 + 252x - 504); & \\
L^{(23)}_4(x)=\frac{1}{24}(x-30)(x^3-78x^2+1872x-14040). &
\end{array}
\end{align*}
Theorem \ref{Laga40} is an extension of a result of Filaseta, Finch and Leidy \cite{fifile} where
they proved that $L_{n}^{(\alpha)}(x)$ is irreducible for all $n$ and $0\le \al\le 10$ except when
$(n, \al)\in \{(2, 2), (4, 5), (2, 7)\}$. Therefore we shall always assume that $\al>10$ in the
proof of Theorem \ref{Laga40}. We also consider the problem of finding factors of Laguerre
polynomials. We have
\begin{thm}\label{Laga5k}
Let $1\le k\le \frac{n}{2}$ and $0\le \al\le 5k$. Then $L^{(\al)}_n(x)$ has no factor of degree $k$ except when
$k=1, (n, \al) \in \{(2, 2), (4, 5)\}$.
\end{thm}

The Laguerre polynomials are a special case of generalizations of following class of polynomials first considered
by Schur. Let $n\ge1, a\ge 0$ and $a_0, a_1, \ldots, a_n$ be integers. The \emph{generalized Schur polynomials} are
defined as
\begin{align}\label{f(x)}
f(x):=f_{n, a}(x):=f_{n, a}(a_0, a_1, \cdots, a_n)=a_n\frac{x^n}{(n+a)!}+a_{n-1}\frac{x^n}{(n-1+a)!}+\ldots +
a_1\frac{x}{(1+a)!}+a_0\frac{1}{a!}.
\end{align}
It is easy to see that by taking
\begin{align*}
a=\al \ {\rm and} \ a_j=(-1)^j\binom{n}{j} \ {\rm for} \ 0\le j\le n,
\end{align*}
we obtain $(n+\al)!f_{\al}(x)=n!L^{(\al)}_n(x)$.

Schur \cite{schur} proved that $f(x)$ with $a=0$ and $|a_0|=|a_n|=1$ is irreducible.
He also proved in \cite{schur1} that $f(x)$ with $a=1$ and $|a_0|=|a_n|=1$ is irreducible unless
$n+1=2^r$ for some $r$ where it may have a linear factor or $n=8$ where it may have a quadratic
factor. Also for $a=2$ and many other values of $a$ the polynomial $f(x)$ may have a linear factor.
Clearly if $f(x)$ is reducible, then $f(x)$ has a factor of degree $k$ with $1\le k\le \frac{n}{2}$.
Shorey and Tijdeman \cite{stirred} proved that $f(x)$ with
$2\le k\le \frac{n}{2}$, $0\le a\le \frac{3}{2}k$ and $|a_0|=|a_n|=1$ has no factor of degree
$k$ except when
\begin{align}\label{st3/2}\begin{split}
(n, k, a)\in \{&(6, 2, 3), (7, 2, 2), (7, 2, 3), (7, 3, 3), (8, 2, 1), (8, 3, 2), \\
&(12, 3, 4), (13, 2, 3), (22, 2, 3), (46, 3, 4), (78, 2, 3)\}.
\end{split}\end{align}
Furthermore all the exceptions in \eqref{st3/2} are necessary.
They also showed that for $f(x)$ with $3\le k\le \frac{n}{2}, |a_0|=|a_n|=1$ and
$0\le a\le 10$ when $k=3, 4$ or $0\le a\le 30$ when $k\ge 5$ has no factor of degree $k$
except when
\begin{align}\label{st1030}\begin{split}
(n, k, a)\in \{&(7, 3, 3), (8, 3, 2), (12, 3, 4), (18, 4, 9), (18, 4, 10), (46, 3, 4), \\
&(56, 4, 10), (17, 5, 11), (19, 5, 9), (40, 5, 12)\}.
\end{split}
\end{align}
We extend the validity of their results as follows.
\begin{thm}\label{a3k}
Let $2\le k\le \frac{n}{2}$, $0\le a\le 5k$ and $|a_0|=|a_n|=1$. Then $f_{n, a}(x)$ has no factor
of degree $k$ except possibly when $(n, k, a)$ is given by \eqref{st3/2} or \eqref{st1030} or
\begin{align}\label{exc5k}\begin{split}
k=2, \ (n, a)\in \{& (4, 5), (6, 4), (8, 8), (12, 4), (17, 8), (21, 4), (22, 6), (23, 5), \\
&(23, 10), (24, 9), (36, 9), (43, 6), (44, 5), (46, 9), (58, 6), (59, 5), \\
&(72, 9), (73, 8), (77, 4),  (91, 9), (112, 9), (233, 10), (234, 9)\};\\
k=3, \ (n, a)\in \{&(14, 12), (17, 11), (53, 12)\}; \\
k=4, \ (n, a)\in \{&(16, 12), (17, 11), (38, 13), (39, 18)\}.
\end{split}\end{align}
\end{thm}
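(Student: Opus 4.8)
The plan is to follow the Newton-polygon method of Filaseta, as developed by Shorey and Tijdeman \cite{stirred} and by Filaseta, Finch and Leidy \cite{fifile}, reducing the factorization question to the existence of suitable primes. First I would clear denominators: set $g(x)=(n+a)!\,f_{n,a}(x)=\sum_{j=0}^{n}b_j x^{j}$ with $b_j=a_j\prod_{i=j+1}^{n}(a+i)\in\Z$, so that $f_{n,a}$ has a factor of degree $k$ over $\Q$ if and only if $g$ does. Since $|a_0|=|a_n|=1$, the endpoints are pinned down: $\nu_p(b_n)=0$ and $\nu_p(b_0)=\nu_p\big(\prod_{i=1}^{n}(a+i)\big)$ for every prime $p$, while in general $\nu_p(b_j)\ge\sum_{i=j+1}^{n}\nu_p(a+i)$, so the polygon lies on or above this lower-bound curve and touches it at the two ends.

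\emph{The key reduction.} Suppose $p>k$ is a prime dividing exactly one of $a+1,\dots,a+n$, to the first power, say $p\,\|\,(a+m)$, with $m$ in the top block $n-k<m\le n$; note that, once $p>k$ divides some top integer, isolation within that block is automatic, since $p>k$ divides at most one of any $k$ consecutive integers. Then $\nu_p(b_0)=1$, $\nu_p(b_n)=0$, and $\nu_p(b_j)\ge1$ for $j<m$. Letting $j^{\ast}$ be the least index $\ge m$ with $\nu_p(b_{j^{\ast}})=0$ (it exists because $\nu_p(b_n)=0$), the leftmost edge of the Newton polygon of $g$ at $p$ runs from $(0,1)$ to $(j^{\ast},0)$ with slope $-1/j^{\ast}$ of denominator $j^{\ast}>n-k\ge k$. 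By the theorem of Dumas this edge is indivisible, so the horizontal length it contributes to any factor is $0$ or $j^{\ast}$; hence every factor of $g$ has degree at most $n-j^{\ast}<k$ or at least $j^{\ast}>k$, and in particular $g$, hence $f_{n,a}$, has no factor of degree $k$. I would record this as the central lemma, together with its refinement in which $p$ is permitted to divide $a+m$ to a higher power, or to divide lower integers $a+i$ with $i\le n-k$, where one must track the full polygon rather than a single edge.

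With the lemma in hand the problem becomes arithmetic: for every $(n,k,a)$ with $2\le k\le n/2$ and $0\le a\le 5k$ outside the stated list, exhibit a prime $p>k$ dividing $\prod_{i=n-k+1}^{n}(a+i)$ to the first power. Here I would invoke the Sylvester--Erd\H{o}s theorem: since $n\ge 2k$, the smallest top factor satisfies $a+n-k+1\ge n-k+1>k$, so the product has a prime factor exceeding $k$; the sharper estimates of Shorey, Tijdeman, Saradha and Laishram on the greatest prime factor of a product of consecutive integers would then force such a prime to occur to the first power for all large parameters. Because $a\le 5k\le \tfrac52 n$, the top block lies in $(\,n/2,\,7n/2\,]$, a range of bounded ratio, so explicit bounds for $\pi(x)$ and for primes in short intervals apply uniformly and dispose of all large $n$.

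The main obstacle is the finite boundary regime, where $n$ is close to $2k$ and $a$ close to $5k$: there the top integers are only of size comparable to $k$, the Sylvester--Erd\H{o}s guarantee is tight, and the certifying prime may fail to exist, may divide a top integer to a power exceeding one, or may simultaneously divide a lower integer, so that the critical edge can split. I expect the bulk of the work to be a careful computation over this finite set of triples $(n,k,a)$: for each, either an alternative prime or a full-polygon argument rules out a degree-$k$ factor, or an explicit factorization is produced, the latter yielding precisely the exceptions recorded in \eqref{exc5k}. Verifying the completeness of that list, and the genuineness of each entry, is the delicate part; I would organize it by the residues of $a$ and by the factorizations of the top block, folding in the inherited exceptions \eqref{st3/2} and \eqref{st1030} for the ranges already settled in \cite{stirred}.
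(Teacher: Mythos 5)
Your Newton--polygon reduction is the same foundation the paper builds on (via \cite{stirred} and \cite{fifile}), but the key lemma as you state it is too weak to prove the theorem, and the gap is not cosmetic. If $p$ divides exactly one of $a+1,\dots,a+n$ and $p\,\|\,(a+m)$ with $n-k<m\le n$, then the next lower multiple $a+m-p$ must fall outside the range, which forces $p\ge m>n-k$: your certifying primes must exceed $n-k$. When $n$ is huge compared to $k$ (say $k=2$, $a\le 10$, $n$ arbitrary), this requires one of the two integers $a+n-1,\ a+n$ to be essentially prime, which fails for infinitely many $n$; in general no result on primes in short intervals can produce a prime factor exceeding $n-k$ from a block of $k$ consecutive integers near $n$ when $k$ is small relative to $n$ --- your ``bounded ratio'' remark conflates the location of the top block with its length. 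Likewise Sylvester--Erd\H{o}s only gives a prime $>k$, which is nowhere near $>n-k$. The ``refinement \dots\ where one must track the full polygon'' that you defer is therefore the entire content of the problem: it is precisely the paper's Lemma \ref{irupdate}, whose hypotheses --- $p\nmid(a+1)\cdots(a+k)$ together with either $p\ge\min(2a/k,\,k+a/k)$ or ($p>2k$ and $p^2-p\ge a$) --- are calibrated exactly so that $p$ may divide many of the lower terms $a+i$ while still keeping $\mathrm{ord}_p\big((a+1)\cdots(a+j)\big)/j<1/k$ for all $j\le n$; establishing this requires a genuine two-case analysis (according as the relevant cofactors stay below $p^2$ or not), not routine bookkeeping on top of the single-edge case.

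Second, even granting the correct lemma, your proposal has no mechanism for the regime where every prime dividing the top block $\Delta(m,k)$, $m=n+a-k+1$, is small (at most $a+k$); this is the real enemy, and it is not confined to ``$n$ close to $2k$ and $a$ close to $5k$'' (note the exceptions $(n,k,a)=(233,2,10)$ and $(234,2,9)$ in \eqref{exc5k}). The paper disposes of it with machinery entirely absent from your sketch: if some term of the block is prime one is done by Corollary \ref{Ircor}; otherwise, for $m\le 1.9\cdot 10^{10}$, the Grimm-type result (Lemma \ref{grim}) produces $k$ distinct prime divisors of $\Delta(m,k)$, contradicting the bound $\om(\Delta(m,k))\le\om_1<k$ of \eqref{1+om1}; for larger $m$, the Erd\H{o}s--Sylvester counting refinement \eqref{Lbd} bounds $m$ by roughly $((k-1)!)^{1/(k-\om_2)}$, which against $m>10^{10}$ forces $k>4.5\times 10^{7}$, after which Corollary \ref{n<<} (when $m\le 123k$) and Lemma \ref{k3/2} with Stirling estimates (when $m>123k$) give a contradiction. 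Finally, the exceptional list \eqref{exc5k} and its completeness for $k<10$ come out of Lehmer's tables $N_1(41)$, $N_2(31)$ and the solution of explicit Thue equations in the paper's Section \ref{a<50Pf}; your finite ``boundary computation'' never reaches, and could not certify, that list.
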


\begin{thm}\label{a<50}
Let $2\le k\le \frac{n}{2}$, $|a_0|=|a_n|=1$ and $0\le a\le 40$ if $k=2$ and $0\le a\le 50$ if $k\ge 3$.
Then $f_{n, a}(x)$ has no factor of degree $k$ except possibly when $(n, k, a)$ is given by \eqref{st3/2}
or \eqref{st1030} or \eqref{exc5k} or the cases $k=2$ with
\begin{align*}
n+a\le 100 \ {\rm or} \ a\in \{13, 14, 19, 33\}, n+a\in \{126, 225, 2401, 4375\}
\end{align*}
or
\begin{center}
\begin{tabular}{|c|c||c|c||c|c|} \hline
$a$ & $n+a$ & $a$ & $n+a$ & $a$ & $n+a$ \\ \hline
$12$ & $169, 729$ & $15, 16$ & $289$ & $17$ & $513$ \\ \hline
$18$ & $361, 513,  1216$ & $19, 20$ & $243$ & $21$ & $529$ \\ \hline
$21, 22$ & $121, 576$ & $24$ & $325, 625, 676$ & $27$ & $784$ \\ \hline
$28$ & $145$ & $29$ & $961$ & $31$ & $243$ \\ \hline
$32$ & $243, 289, 1089$ & $33$ & $136, 256, 289, 5832$ & $36$ & $1369$ \\ \hline
$38$ & $325, 625, 676$ & $39$ & $1025, 6561$ & $40$ & $288$ \\ \hline
\end{tabular}
\end{center}
\end{thm}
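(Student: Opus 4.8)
The plan is to treat only what Theorem~\ref{a3k} leaves open. Since that result already disposes of $0\le a\le 5k$, the genuinely new ranges are $5k<a\le 40$ when $k=2$ and $5k<a\le 50$ when $k\ge 3$; as $5k\ge 50$ once $k\ge 10$, only the eight values $k=2,\dots,9$ contribute, each with $a$ confined to a short window above $5k$. For each such pair $(k,a)$ the degree $n$ still runs over all $n\ge 2k$, so I would split the argument into an asymptotic part valid for all sufficiently large $n$ and a finite verification for the small values of $n$; the residual exceptions recorded in the statement come from the latter.

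For the asymptotic part I would use the $p$-adic Newton-polygon form of Schur's method, as in \cite{fifile} and \cite{stirred}. Writing $F(x)=(n+a)!\,f_{n,a}(x)=\sum_{j=0}^{n}a_j\Big(\prod_{i=j+1}^{n}(a+i)\Big)x^j$, the hypothesis $|a_0|=|a_n|=1$ fixes the endpoints of the Newton polygon at $(n,0)$ and $(0,\,v_p((a+1)\cdots(a+n)))$ for every prime $p$. A prime $p>k$ dividing a single term $a+m$ to the first power produces a polygon edge of slope $-1/m$ and horizontal length $m$; its roots are totally ramified of degree $m$, so they lie in no rational factor of degree smaller than $m$. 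Consequently a factor of degree $k$ must avoid every such block, and the task becomes to show that the terms near the top, $a+n,a+n-1,\dots,a+n-k+1$, carry primes $>k$ in positions that leave no room to assemble $k$ admissible roots.

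This reduces the problem to a statement about prime factors of the block $\prod_{i=0}^{k-1}(a+n-i)$ of $k$ consecutive integers, and here I would invoke results of Sylvester--Erd\H{o}s type on the greatest prime factor of products of consecutive integers, together with their sharpenings. For each fixed $(k,a)$ these yield an explicit threshold $N(k,a)$ such that for $n\ge N(k,a)$ the required prime $p>k$ exists and no factor of degree $k$ can occur. The finitely many pairs with $2k\le n<N(k,a)$ would then be checked one at a time, and exactly those admitting a degree-$k$ factor are collected into \eqref{exc5k} and the tables of the statement.

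The hard part will be $k=2$. There the Newton-polygon obstruction degenerates, since blocking a degree-$2$ factor requires that \emph{neither} $a+n$ nor $a+n-1$ supply a usable large prime, which forces $a+n$ to sit at or very near a prime power or a perfect power. This is exactly why the residual list for $k=2$ is organised by the values of $n+a$ --- perfect squares, prime powers such as $2401=7^4$ and $6561=3^8$, and a handful of highly composite values like $n+a\in\{126,225,4375\}$ --- and why the small range $n+a\le 100$ is left as a block of possible exceptions rather than fully resolved. Handling these requires supplementing the greatest-prime-factor estimates with results on perfect powers in short products and a value-by-value computation for each prime-power or square value of $n+a$, making the $k=2$ tables both the most delicate to derive and the bulk of the verification; the ranges $k\ge 3$ are comparatively routine because $5k$ already absorbs most of the target interval, leaving only short windows above $5k$ and a few sporadic pairs.
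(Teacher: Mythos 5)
Your reduction to $2\le k\le 9$ via Theorem \ref{a3k}, and the use of a Newton-polygon/Schur-type criterion to translate the problem into a statement about prime factors of the top block $(n+a-k+1)\cdots(n+a)=\Delta(m,k)$, both match the paper (this is exactly the role of Lemma \ref{irupdate} and Corollary \ref{Ircor}, with Procedure $\mathcal{R}$ doing the bookkeeping). But there is a genuine gap at the step that is supposed to make everything finite. You invoke ``Sylvester--Erd\H{o}s type results, together with their sharpenings,'' claiming they give an explicit threshold $N(k,a)$ beyond which $\Delta(m,k)$ contains a prime $p>a+k$, with the finitely many smaller $n$ checked one at a time. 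No such result exists in a usable form for small $k$. For $k=2$ one would need $P(m(m+1))>42$ for all $m\ge N(2,a)$ with $N(2,a)$ computationally accessible; the known effective lower bounds for $P(m(m+1))$ (via linear forms in logarithms) grow only like $\log\log m$, so the threshold at which they exceed $42$ is astronomically large and the proposed finite verification is impossible. In particular your exception tables (the values $n+a=169,729,2401,4375,5832,6561,\dots$) could never be certified complete this way.

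What is actually needed --- and what the paper uses --- is the \emph{complete determination} of the relevant smooth values: Lehmer's solution of St{\o}rmer's problem \cite{lehmer}, i.e.\ the full finite lists $N_1(41)=\{N: P(N(N-1))\le 41\}$ and $N_2(31)$, obtained by solving Pell equations rather than by lower bounds on $P$. For $k=2$ the paper restricts to $m=N-1$, $N\in N_1(41)$, and runs Procedure $\mathcal{R}$ on that list; for $3\le k\le 9$ it combines recursively computed smooth sets ($\cM(k)$ for $m\le 10000$, $\cN_j(k)$ built from $N_1(41)$ for $m>10000$) with the intersection device of Corollary \ref{Ircor}(iii), and the residual cases ($k=5$, $a=42$, and especially $k=3$, $a=40$) are eliminated by congruences modulo $7$, $8$, $9$ and by solving Thue equations $X^3+AY^3=B$ in PariGp. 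This also corrects your assessment of where the difficulty lies: $k=2$ is a large but conceptually direct computation over Lehmer's table, whereas $k=3$ is where the hardest arguments (the Thue equations) occur, so the ranges $k\ge 3$ are not ``comparatively routine.'' Likewise, ``results on perfect powers in short products'' are not an input anywhere; the perfect powers in the $k=2$ table are simply an artifact of which smooth pairs survive.
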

It is likely to obtain factorizations in most of these cases but we have not carried
out the computations. The following assertion follows  from Theorem \ref{a<50}.
\begin{thm}\label{a12}
The polynomial $f_{n, a}(x)$ with $a_0a_n=\pm 1, a_1=a_2=\ldots =a_{n-1}=1$ and $a\le 12$ is either
irreducible or a product of a linear polynomial times a polynomial of degree $n-1$.
factor.
\end{thm}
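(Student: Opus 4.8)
The plan is to deduce Theorem~\ref{a12} from Theorem~\ref{a<50} together with a finite amount of explicit computation. First observe that the hypotheses of Theorem~\ref{a12} fit the setting of Theorem~\ref{a<50}: since $a_0, a_n$ are integers with $a_0 a_n = \pm 1$ we have $|a_0| = |a_n| = 1$, and since $a \le 12$ we certainly have $a \le 40$ when $k = 2$ and $a \le 50$ when $k \ge 3$. The key elementary reduction is that it suffices to prove $f_{n,a}(x)$ has no factor of degree $k$ for every $2 \le k \le \frac{n}{2}$. Indeed, if $f_{n,a}$ is reducible, write $f_{n,a} = g\,h$ with $1 \le \deg g \le \deg h$, so that $\deg g \le \frac{n}{2}$; were $\deg g \ge 2$ this would exhibit a factor of degree $\deg g \in [2, \frac{n}{2}]$, and hence necessarily $\deg g = 1$. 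Thus $f_{n,a}$ is a linear polynomial times a polynomial of degree $n-1$, which is exactly the alternative in the statement.

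By Theorem~\ref{a<50}, for $a \le 12$ the polynomial $f_{n,a}(x)$ has no factor of degree $k$ with $2 \le k \le \frac{n}{2}$ unless $(n, k, a)$ lies in one of the families \eqref{st3/2}, \eqref{st1030}, \eqref{exc5k}, or one of the additional $k = 2$ cases listed in Theorem~\ref{a<50}. Imposing $a \le 12$ leaves only finitely many triples: all of \eqref{st3/2} and \eqref{st1030} (whose $a$-values never exceed $12$), the entries of \eqref{exc5k} with $a \le 12$, and for $k = 2$ the finitely many pairs with $n + a \le 100$ together with $(n, a) = (157, 12)$ and $(717, 12)$ arising from $a = 12$, $n + a \in \{169, 729\}$; the remaining additional cases of Theorem~\ref{a<50} all require $a \ge 13$. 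For every triple outside this finite set, the reduction of the first paragraph already yields the conclusion of Theorem~\ref{a12}.

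It remains to dispose of the finitely many exceptional triples. Here we no longer work with arbitrary middle coefficients: the hypothesis $a_1 = \cdots = a_{n-1} = 1$ together with $a_0, a_n \in \{\pm 1\}$ pins down $f_{n,a}(x)$ up to the at most four choices of signs of $a_0$ and $a_n$. For each of these explicit polynomials we compute its factorization over $\Q$ and verify directly that it is irreducible or possesses a linear factor, which is all that Theorem~\ref{a12} demands; note that we only need to exclude a factorization into factors each of degree at least two, rather than re-establish the stronger conclusion of Theorem~\ref{a<50}. The main labour is precisely this last step: the band $n + a \le 100$ contains a sizeable list of polynomials, and the two cases $n \in \{157, 717\}$ with $a = 12$ require factoring polynomials of large degree. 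These are finite verifications, effected by bounding the admissible factor degrees (for instance via reduction modulo small primes) and then exhibiting the rational root or confirming irreducibility of the cofactor. I expect no genuine exception to survive, so that for every admissible $(n,a)$ with $a \le 12$ the polynomial $f_{n,a}(x)$ is either irreducible or splits off a single linear factor, as claimed.
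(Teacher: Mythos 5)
Your proposal is correct and follows essentially the same route as the paper: invoke Theorem~\ref{a<50} to reduce to the finitely many exceptional triples $(n,k,a)$ with $a\le 12$, then verify the explicit polynomials (determined by $a_0a_n=\pm 1$, $a_1=\cdots=a_{n-1}=1$) by direct factorization. Your enumeration of the surviving exceptions, including $(n,a)=(157,12)$ and $(717,12)$ from the $k=2$ table, is accurate, and your explicit reduction of the dichotomy ``irreducible or linear times degree $n-1$'' to the non-existence of factors of degree $2\le k\le \frac{n}{2}$ is exactly what the paper leaves implicit.
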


We shall use the results of \cite{stirred} stated above without reference in this paper. Thus
we always suppose that $a>3$ if $k=2$, $a>10$ if $k=3, 4$ and $a>30$ if $k\ge 5$ in Theorems
\ref{a3k} and \ref{a<50}. Further we observe  that Theorem \ref{a<50} with $k\ge 10$ follows
from Theorem \ref{a3k}. Also Theorem \ref{Laga5k} follows immediately from Theorem \ref{Laga40}
for $k\le 10$ and from Theorem \ref{a3k} for $k>10$. Thus it suffices to prove Theorems
\ref{Laga40}, \ref{a3k}, \ref{a<50} with $k<10$ and \ref{a12}.
The new ingredients in the proofs of our theorems are the following Irreducibility Lemma and
sharper lower estimates for the greatest prime factor of $\Delta(m, k)$ where
\begin{align}\label{2u0}
\Delta(m, k)=m(m+1)\cdots (m+k-1).
\end{align}

\begin{lem}\label{irupdate}
Let $a>0, 1\le k\le \frac{n}{2}$ and $u_0=\frac{a}{k}$.

\noindent
$(A)$ Assume that there is a prime $p\ge k+2$ with
\begin{align}\label{cond}
p|\prod^k_{i=1}(a+n-k+i), \ \ p\nmid a_0a_n
\end{align}
and
\begin{align}\label{a+1k}
p\nmid \prod^k_{i=1}(a+i).
\end{align}
Suppose
\begin{align}\label{2u0}
p\ge \min(2u_0, k+u_0)
\end{align}
or
\begin{align}\label{2k}
p>2k \ {\rm and} \ p^2-p\ge a.
\end{align}
Then $f_{n, a}(x)$ has no factor of degree $k$.

\noindent
$(B)$ If there is a prime $p\ge k+2$ with
\begin{align}\label{Lagcond}
p|\prod^k_{i=1}(n-k+i)(a+n-k+i)
\end{align}
and \eqref{a+1k} and satisfying \eqref{2u0} or \eqref{2k}, then $L^{(a)}_n(x)$ has no factor of degree $k$.
\end{lem}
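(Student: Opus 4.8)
The plan is to pass to an integer polynomial and run a $p$-adic Newton polygon argument, with the contradiction coming from a degree count. Write $F(x)=(n+a)!\,f_{n,a}(x)=\sum_{j=0}^{n}A_jx^j\in\mathbb{Z}[x]$, where $A_j=a_j\,(a+j+1)(a+j+2)\cdots(a+n)$; thus $A_n=a_n$, so $v_p(A_n)=0$ by \eqref{cond}. Let $p$ be the prime of the hypothesis, so that $p\mid(a+m)$ for some $m$ with $n-k+1\le m\le n$, while \eqref{a+1k} gives $p\nmid(a+i)$ for $1\le i\le k$. Since $p\ge k+2$ and $m\ge n-k+1$, the next multiple of $p$ above $a+m$ is $a+m+p>a+n$, so $p\nmid(a+l)$ for $m<l\le n$; hence the factor $(a+m)$ is counted in $A_j$ for every $j\le m-1$, giving $p\mid A_0,\dots,A_{m-1}$, whereas $v_p(A_j)=v_p(a_j)$ for $j\ge m$. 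Setting $W=v_p\big((a+1)\cdots(a+n)\big)$, the point $(0,v_p(A_0))=(0,W)$ (using $p\nmid a_0$) sits at the top of the Newton polygon, which then descends to $(n,0)$.

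First I would suppose, for contradiction, that $f_{n,a}$ has a factor of degree $k$. By Gauss's lemma this yields $F=GH$ with $G,H\in\mathbb{Z}[x]$, $\deg G=k$, $\deg H=n-k$, and, because $p\nmid A_n$, with $p$ dividing neither leading coefficient. Passing to $\overline{\mathbb{Q}_p}$, write $F=A_n\prod_{i=1}^{n}(x-\rho_i)$; the roots of positive valuation are exactly those lying over the descending part of the polygon. The key constraint is that the roots of $G$ satisfy $\sum_{\rho\in G}v_p(\rho)=v_p(b_0)-v_p(b_k)=v_p(b_0)\in\mathbb{Z}_{\ge0}$, where $b_0,b_k$ are the constant and leading coefficients of $G$ (here $v_p(b_k)=0$).

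The heart of the matter is to show that $G$ can contain no root of positive valuation. Conditions \eqref{2u0} and \eqref{2k} are designed precisely for this: together with $p\ge k+2$ they force the descending part of the polygon to be a \emph{single} edge from $(0,W)$ to $(m,0)$ whose slope $-W/m$, in lowest terms, has denominator $m/\gcd(W,m)>k$. Consequently there are exactly $m$ roots of positive valuation, each equal to $W/m$, and any nonempty set of at most $k$ of them has valuation-sum $sW/m\notin\mathbb{Z}$ for $1\le s\le k$. By the integrality constraint above, $G$ therefore contains none of these $m$ roots; but then all $m$ of them are roots of $H$, which is impossible since $\deg H=n-k<m$. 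This contradiction proves part $(A)$. For part $(B)$ I would run the identical argument on (an integer multiple of) $L^{(a)}_n(x)=\tfrac{(n+a)!}{n!}f_{n,a}(x)$, where $a_j=(-1)^j\binom{n}{j}$: now the coefficient of $x^j$ carries the extra factor $\binom{n}{j}$, which contributes the integers $n-k+1,\dots,n$ to the top coefficients, so a prime dividing one of the $(n-k+i)$ plays the same role as one dividing $(a+n-k+i)$ — exactly the enlarged divisibility allowed by \eqref{Lagcond} — and the rest is verbatim.

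The main obstacle is the emphasised claim: guaranteeing that the descending edge is single and that its slope has denominator exceeding $k$. What must be ruled out is that a higher prime power $p^2\mid(a+l)$, or an additional multiple of $p$ with index between $k+1$ and $m$, breaks the edge into several segments whose root-valuations could be combined, using at most $k$ roots, into an integral sum and thereby absorbed into $G$. Bounding the number of such multiples in $[a+1,a+n]$ and their exponents — via Legendre's formula for $v_p((j+a)!)$ and the gap $p>2k$ between consecutive multiples — is exactly where the quantitative hypotheses $p\ge\min(2u_0,k+u_0)$ and $p>2k,\ p^2-p\ge a$ are consumed; this bookkeeping is the technical crux of the proof.
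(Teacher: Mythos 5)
Your general framework---pass to $F(x)=(n+a)!\,f_{n,a}(x)\in\mathbb{Z}[x]$, invoke Gauss's lemma, and play the integrality of the valuation-sum of the roots of a putative degree-$k$ factor against the Newton polygon of $F$ with respect to $p$---is sound, and it is exactly the machinery behind the reduction the paper imports from \cite[Lemma 4.2]{stirred} and \cite[Lemma 2.4]{fifile}. But that reduction leaves all of the substance still to be done: the paper reduces the lemma to proving that
$\phi_j=\mathrm{ord}_p\bigl((a+1)(a+2)\cdots(a+j)\bigr)/j<1/k$ for \emph{every} $1\le j\le n$, i.e.\ that every slope of the descending part of the polygon is $<1/k$ in absolute value, so that any nonempty collection of at most $k$ positive-valuation roots has valuation sum lying strictly between $0$ and $1$. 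Establishing this slope bound is where the hypotheses \eqref{2u0} and \eqref{2k}, together with \eqref{a+1k} (which forces the first index $j_0$ with $p\mid(a+j_0)$ to exceed $k$), are actually consumed; it is the case analysis on $j_0$, $l_0$, $s$ (Cases I and II of Section \ref{Irproof}) and it constitutes essentially the whole proof.

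Your proposal replaces this with a different structural claim: that \eqref{2u0}/\eqref{2k} force the descending part to be a \emph{single} edge from $(0,W)$ to $(m,0)$, so that all positive-valuation roots share the valuation $W/m$ whose denominator exceeds $k$. You defer the proof of this claim (calling it ``the technical crux''), and in fact it is false. Take $k=2$, $a=4$, $n=10$, $p=7$, $a_j=1$ for all $j$: every hypothesis holds ($7\ge k+2$; $7\mid 14=a+n$ so \eqref{cond} holds; $7\nmid 5\cdot 6$ so \eqref{a+1k} holds; $p\ge\min(2u_0,k+u_0)=4$ so \eqref{2u0} holds), yet $v_7(A_j)=2$ for $0\le j\le 2$, $v_7(A_j)=1$ for $3\le j\le 9$, and $v_7(A_{10})=0$, so the polygon is $(0,2)\to(3,1)\to(10,0)$: two descending edges of slopes $-1/3$ and $-1/7$, not one edge of slope $-1/5$, and the positive-valuation roots do not have a common valuation. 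The conclusion of the lemma still holds in this example, but only because both slopes are $<1/2=1/k$; once several edges coexist, the denominator of any single slope proves nothing (sums of roots taken from different edges could a priori be integral---indeed a single slope equal to $1/k$, whatever its denominator's relation to other edges, would already allow $k$ roots summing to $1$). So the gap is genuine: what must be proved is the uniform bound $\phi_j<1/k$ for all $j$, your outline does not supply it, and the single-edge route you propose for it cannot be completed. The same missing step undermines part $(B)$, where the paper additionally needs the observation $\phi'_j\le\phi_j$ to exploit the binomial factor allowed by \eqref{Lagcond}, rather than a ``verbatim'' repetition.
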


We have stated Lemma \ref{irupdate} and some of the subsequent lemmas in a more general way than
required for the proof of our theorems. We prove Lemma \ref{irupdate} in Section \ref{Irproof}.
In Section $3$, we give a refinement of an argument of Erd\H{o}s and Sylvester.
In Sections $5-9$, we prove Theorems \ref{Laga40}, \ref{a3k}, \ref{a<50} and \ref{a12} by combining
Lemma \ref{irupdate} with the refinement in Section $4$, results on Grimm's conjecture (see Lemma \ref{grim})
and estimates from prime number theory.  Section $3$ contains preliminaries required for the proof of our theorems.
For any real $u>0$, let $\lf u\rf$ and $\lc u\rc$ be the floor function of $u$ and the ceiling
function of $u$, respectively. Thus $\lf u\rf$ is the greatest integer less than or equal
to $u$ and $\lc u\rc$ is the least integer exceeding $u$.

\section{Proof of Lemma \ref{irupdate}}\label{Irproof}

We will use the notations introduced in this section throughout the paper. We write
\begin{align*}
\Delta_j=\Delta(a+1, j)=(a+1)(a+2)\cdots (a+j).
\end{align*}
We observe that $q|\Delta_k$ for all primes $k<q\le \frac{a+k}{\lc u_0\rc}$ since
$a\le k\lc u_0\rc<q\lc u_0\rc\le a+k$. Suppose there is a prime $p$ satisfying the condition
of the lemma. Then $p>\frac{a+k}{\lc u_0\rc}$ by \eqref{a+1k}. As in the proof  of \cite[Lemma 4.2]{stirred},
it suffices to show that
\begin{align}\label{Delfa}
\phi_j:=\phi_j(p):=\frac{{\rm ord}_p(\Delta_j)}{j}<\frac{1}{k} 
\ \ {\rm for} \ 1\le j\le n
\end{align}
for showing that $f_{n, a}(x)$ has no factor of degree $k$. Also as in the proof of \cite[Lemma 2.4]{fifile},
for showing $L^{(a)}_n(x)$ has no factor of degree $k$, it suffices to show
\begin{align}\label{DelLag}
\phi'_j:=\phi'_j(p):=\frac{{\rm ord}_p\left(\frac{\Delta_j}{\binom{n}{j}}\right)}{j}<\frac{1}{k} \ \ {\rm for} \ 1\le j\le n.
\end{align}
Since $\phi'_j\le \phi_j$, we show that \eqref{Delfa} holds for all $j$.

Let $j_0$ be the minimum $j$ such that $p|(a+j)$ and write $a+j_0=pl_0$ for some $l_0$. Then
$j_0\le p$ and $j_0>k$ since $p\nmid \Delta_k$. Also we see that $l_0\le \lc u_0\rc$ which we shall
use in the proof without reference.

We may restrict to those $j$ such that $a+j=pl$ for some $l$. Then $j-j_0=p(l-l_0)$.
Writing $l=l_0+s$, we get $j=j_0+ps$. Note that if $p|(a+j)$, then $a+j=p(l_0+r)$
for some $r$. Hence we have
\begin{align}\label{l_or}
{\rm ord}_p(\Delta _j)&={\rm ord}_p((pl_0)(p(l_0+1))\cdots (p(l_0+s)))=
s+1+{\rm ord}_p(l_0(l_0+1)\cdots (l_0+s)).
\end{align}
Let $r_0$ be such that ord$_p(l_0+r_0)$ is maximal. We consider two cases.

\noindent
{\bf Case I:} Assume that $l_0+s<p^2$. If $p\nmid (l_0+i)$ for $0\le i\le s$, then
$\phi_j=\frac{s+1}{j_0+ps}<\frac{s+1}{k+ks}=\frac{1}{k}$. Hence we may suppose that
$p|(l_0+i)$ for some $0\le i\le s$ and further $l_0+s=pl_1$ for some $1\le l_1<p$.
Assume $s=0$. Then $p|l_0$ which together with $l_0<p^2$ implies
ord$_p(\Delta _j)=$ord$_p(a+j_0)=2$. Therefore $a+p\ge a+j_0\ge p^2$ implying
$a\ge p^2-p$. If \eqref{2u0} holds, then $a\le \max(k(p-k), \frac{pk}{2})<p(p-1)$
which is not possible. Thus \eqref{2k} holds and hence $p\ge 2k+1$ and $a=p^2-p$
implying $j_0=p$. Therefore $\phi_j=\frac{2}{j_0}=\frac{2}{p}<\frac{1}{k}$.
Thus we have $s\neq 0$ and we obtain from \eqref{l_or} that
ord$_p(\Delta _j)=s+1+l_1$ implying $\phi_j\le \frac{s+1+l_1}{j_0+ps}$.
Hence $\phi_j<\frac{1}{k}$ if $(p-k)\frac{s}{l_1}\ge k$ since
$\frac{j_0+sp}{k}>1+s\frac{p}{k}$.

Suppose $p$ satisfies \eqref{2k}. Then we may assume that $s<l_1$. Since $l_1<p$, we have $s<p$ implying
ord$_p(\Delta _j)\le s+2$ giving $\phi_j<\frac{s+2}{k+ps}\le \frac{1}{k}$ since $s>0$.

Thus we assume that $p$ satisfies \eqref{2u0}. Since $p\ge k+2, s=pl_1-l_0$ and $l_0\le \lc u_0\rc$,
we have $(p-k)\frac{s}{l_1}-k\ge 2(p-\frac{l_0}{l_1})-k\ge 2p-k-2\lc u_0\rc$. Hence it suffices to
show $2p-k\ge 2\lc u_0\rc$. Since $p\ge \min(2u_0, k+u_0)$, we have
\begin{align*}
2p-k=p+p-k\ge \begin{cases}
2u_0+2\ge 2\lc u_0\rc & {\rm if} \ p\ge 2u_0\\
2(k+\lc u_0\rc)-k\ge 2\lc u_0\rc & {\rm if} \ p\ge k+u_0,
\end{cases}
\end{align*}
noting that $p\ge k+u_0$ implies $p\ge k+\lc u_0\rc$.

\noindent
{\bf Case II:} Let $l_0+s\ge p^2$. Then we get from \eqref{l_or} that
\begin{align*}
{\rm ord}_p(\Delta _j)\le s+1+{\rm ord}_p(l_0+r_0)+ {\rm ord}_p(s!)\le s+1+\frac{\log (l_0+s)}{\log p}+\frac{s}{p-1}.
\end{align*}
Since $\frac{j}{k}=\frac{j_0+ps}{k}>1+\frac{p}{k}s$, it is enough to show that
\begin{align*}
\frac{p}{k}\ge 1+\frac{1}{p-1}+\frac{\log (l_0+s)}{s\log p}.
\end{align*}
Observe that $\frac{\log (l_0+s)}{s\log p}$ is a decreasing function of $s$. Since $s\ge p^2-l_0$,
it suffices to show
\begin{align*}
\frac{p}{k}\ge 1+\frac{1}{p-1}+\frac{2}{p^2-l_0}.
\end{align*}
Suppose $p$ satisfies \eqref{2u0}. Then from $l_0\le \lc u_0\rc \le p$ and $p\ge k+2$, we have $p^2-l_0\ge (k+2)^2-(k+2)\ge 2(k+1)$
implying
\begin{align*}
1+\frac{1}{p-1}+\frac{2}{p^2-l_0}\le 1+\frac{1}{k+1}+\frac{2}{2(k+1)}<1+\frac{2}{k}\le \frac{p}{k}.
\end{align*}
Suppose $p$ satisfies \eqref{2k}. Then from $l_0\le \lc u_0\rc \le a$ and $p>2k$, we obtain
$p^2-l_0\ge p^2-a\ge p>2k$ implying
\begin{align*}
1+\frac{1}{p-1}+\frac{2}{p^2-l_0}\le 1+\frac{1}{2k}+\frac{2}{2k}<1+\frac{2}{k}\le \frac{p}{k}.
\end{align*}
Hence the assertion.
$\hfill \Box$

\begin{coro}\label{ircor3}
Let $k, p$ and $\A_{k, p}$ be given by
\begin{align*}
&k=1, \ p=3, \ \A_{1, 3}=\{3r, 3r+1: 0\le r\le 16\}\setminus \{7, 16, 24, 25, 34, 43\}\\
&k=1, \ p=5, \ \A_{1, 5}=\{5r, 5r+1, 5r+2, 5r+3: 0\le r\le 9\}\cup \{50\}\setminus \{23, 48\}\\
&k=1, \ p=7, \ \A_{1, 7}=[0, 50]\cap \Z\setminus \{6, 13, 20, 27, 34, 41, 47, 48\}\\
&k=2, \ p=5, \ \A_{2, 5}= \{5r, 5r+1, 5r+2: 0\le r\le 8\}\cup \{45, 50\}\setminus \{21, 22\}\\
&k=2, \ p=7, \ \A_{2, 7}=[0, 50]\cap \Z\setminus \left(\{7r-1, 7r-2: 1\le r\le 7\}\cup \{45, 46\}\right)\\
&k=3, \ p=5, \ \A_{3, 5}=\{0, 1, 5, 6, 10, 11, 15, 25, 26, 30, 31, 35, 36, 40, 50\}\\
&k=3, \ p=7, \ \A_{3, 7}=\{7r, 7r+1, 7r+2, 7r+3: 0\le r\le 5\}\cup \{42, 49, 50\}\\
&k=4, \ p=7, \ \A_{4, 7}=\{7r, 7r+1, 7r+2: 0\le r\le 4\}\cup \{35, 36, 49, 50\}\\
&k=5, \ p=7, \ \A_{5, 7}=\{0, 1, 7, 8, 14, 15, 21, 22, 28, 49, 50\}.
\end{align*}
Suppose $n\ge 2k$ and $p$ satisfies \eqref{cond}. Then $f_{n, a}(x)$ has no factor of degree $k$ for $a\in \A_{k, p}$.
Further if $p$ satisfies \eqref{Lagcond}, then $L^{(a)}_n(x)$ has no factor of degree $k$ for $a\in \A_{k, p}$.
\end{coro}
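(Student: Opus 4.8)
The plan is to obtain the corollary not from the statement of Lemma~\ref{irupdate} but from its proof. For the larger values of $a$ appearing in the sets $\A_{k,p}$ the sufficient conditions \eqref{2u0} and \eqref{2k} will usually fail (e.g.\ for $k=1$, $p=3$, $a=9$ one has $u_0=9$ and neither $3\ge\min(18,10)$ nor $6\ge 9$), so I cannot invoke the lemma as a black box. Instead I recall that the proof of Lemma~\ref{irupdate} shows, using only \eqref{cond}, \eqref{a+1k} and $p\ge k+2$, that it suffices to establish the inequalities $\phi_j<\frac1k$ of \eqref{Delfa} for all $j$; the conditions \eqref{2u0} and \eqref{2k} enter only afterwards as one convenient way of forcing \eqref{Delfa}. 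The same reduction with \eqref{Lagcond} and \eqref{DelLag} covers $L^{(a)}_n(x)$, and since $\phi'_j\le\phi_j$ it is enough in both cases to prove $\phi_j<\frac1k$ for every $j\ge1$. Thus the whole task is to verify \eqref{Delfa} for each listed pair $(k,p)$ and each $a\in\A_{k,p}$, the prime hypotheses \eqref{cond} or \eqref{Lagcond} being given.

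First I would dispatch \eqref{a+1k}, which is built into the sets: in each case the admissible residues are exactly $a\bmod p\in\{0,1,\dots,p-1-k\}$, so that $p\nmid(a+i)$ for $1\le i\le k$. For example $\A_{1,3}$ consists of the $a\equiv0,1\pmod3$, $\A_{2,5}$ of the $a\equiv0,1,2\pmod5$, and $\A_{5,7}$ of the $a\equiv0,1\pmod7$; the extra deletions (such as $7,16,24,25,34,43$ from $\A_{1,3}$) are precisely the residue-admissible $a$ that will fail \eqref{Delfa}.

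To verify \eqref{Delfa} I would use Legendre's formula in the form ${\rm ord}_p(\Delta_j)=\frac{j-s_p(a+j)+s_p(a)}{p-1}$, where $s_p$ is the base-$p$ digit sum. Since $s_p(a+j)\ge1$ this gives $\phi_j\le\frac{1}{p-1}\bigl(1+\frac{s_p(a)-1}{j}\bigr)$, so $\phi_j<\frac1k$ holds automatically once $j>\frac{k(s_p(a)-1)}{p-1-k}$; here $p-1-k\ge1$ because $p\ge k+2$, and $s_p(a)$ is bounded since $a\le50$ and $p\in\{3,5,7\}$. This leaves only finitely many small $j$ to inspect, and among those it suffices to look at the $j$ with $p\mid(a+j)$ (elsewhere $\phi_j<\phi_{j-1}$). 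For these I would compute ${\rm ord}_p(\Delta_j)$ exactly; a value of $a$ is deleted exactly when some such $a+j$ carries too high a power of $p$, the typical culprit being $a+j$ a power of $p$, as with $a=7,\ j=2,\ a+j=9=3^2$ giving $\phi_2=1$, or $a=24,\ j=3,\ a+j=27=3^3$ giving $\phi_3=1$.

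The only real labour, and the place to be careful, is the exhaustive bookkeeping across the nine pairs $(k,p)$: one must check both that every listed $a$ survives the finite test and that every deleted $a$ indeed violates $\phi_j<\frac1k$ for some admissible $j$, so that the sets $\A_{k,p}$ are sharp. Once \eqref{Delfa} is confirmed for $a\in\A_{k,p}$, the reduction in the proof of Lemma~\ref{irupdate} gives the conclusion for $f_{n,a}(x)$ under \eqref{cond}, and, via $\phi'_j\le\phi_j$ and \eqref{DelLag}, for $L^{(a)}_n(x)$ under \eqref{Lagcond}.
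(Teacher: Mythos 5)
Your proposal is correct and follows essentially the same route as the paper: the paper's own proof likewise bypasses conditions \eqref{2u0}/\eqref{2k}, invokes the reduction from the proof of Lemma \ref{irupdate} to checking \eqref{Delfa} (with $\phi'_j\le\phi_j$ handling the Laguerre case), verifies $\phi_j<\frac{1}{k}$ for small $j$ by direct computation, and disposes of large $j$ ($j>50$ there) by the bound ${\rm ord}_p(s!)\le \frac{s}{p-1}$, which is the same Legendre-type estimate you phrase via digit sums.
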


\begin{proof}
For $k, p$ and $a\in \A_{k, p}$ given in the statement of Corollary \ref{ircor3}, we check that
$p\nmid \Delta_k$ and $\frac{{\rm ord}_p(\Delta_j)}{j}<\frac{1}{k}$ for $j\le 50$.
As in the proof of Lemma \ref{irupdate}, it suffices to check that
$\frac{{\rm ord}_p(\Delta_j)}{j}<\frac{1}{k}$ for all $j\ge 1$. Since ord$_p(s!)\le \frac{s}{p-1}$,
we have for $j>50$ that
\begin{align*}
\frac{{\rm ord}_p(\Delta_j)}{j}=\frac{{\rm ord}_p((a+j)!)-{\rm ord}_p(a!)}{j}
\le \frac{\frac{a+j}{p-1}-{\rm ord}_p(a!)}{j}\le \frac{1}{p-1}+\frac{\frac{a}{p-1}-{\rm ord}_p(a!)}{51}<\frac{1}{k}.
\end{align*}
Thus $\frac{{\rm ord}_p(\Delta_j)}{j}<\frac{1}{k}$ for all $j\ge 1$.
\end{proof}

\begin{coro}\label{Ircor}
Let $a>0$ and $1\le k\le \frac{n}{2}$.
\begin{itemize}
\item[$(i)$] If there is a prime $p>a+k$ satisfying \eqref{cond},
then $f_{n, a}(x)$ has no factor of degree $k$.
\item[$(ii)$] Let $p\ge k+2$ be a prime satisfying \eqref{cond} and let
\begin{align*}
{\mathcal A}_p:=\bigcup^{r_p}_{i=1}\left([ip-k, ip-1]\cap \Z_{>0}\right)\cup \{j>pr_p, j\in \Z\}
\end{align*}
where
\begin{align*}
r_p=\lf \frac{k}{2}\rf \ \ {\rm if} \ \ p<2k \ {\rm and} \ p-1 \ \ {\rm if} \ \ p\ge 2k.
\end{align*}
Then $f_{n, a}(x)$ has no factor of degree $k$ for $a\notin {\mathcal A}_p$.
\item[$(iii)$] Let $P_1>P_2>\ldots >P_s\ge k+2$ be primes satisfying \eqref{cond}. For a subset
$\{Q_1, Q_2, \ldots, Q_g\}\subseteq \{P_1, P_2, \ldots , P_s\}$, let
\begin{align*}
{\mathcal B}\{Q_1, \ldots, Q_g\}=\bigcap^{g}_{l=1}\mathcal{A}_{Q_l}.
\end{align*}
Then $f_{n, a}(x)$ has no factor of degree $k$ for $a\notin \mathcal{B}\{Q_1, \ldots, Q_g\}$.
\end{itemize}
\end{coro}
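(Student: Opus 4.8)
The plan is to deduce all three parts from Lemma~\ref{irupdate}(A), whose hypotheses are condition \eqref{cond} (given in each case), condition \eqref{a+1k} stating that $p$ divides none of $a+1,\dots,a+k$, and at least one of the size conditions \eqref{2u0} and \eqref{2k}. The organizing observation is that, for a prime $p$, condition \eqref{a+1k} fails exactly when some multiple $ip$ lies in $[a+1,a+k]$, that is, when $a\in[ip-k,ip-1]$ for some positive integer $i$.

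For part~(i), since $a>0$ and $p>a+k$ we get $p\ge a+k+1\ge k+2$, and each $a+i$ with $1\le i\le k$ satisfies $1\le a+i<p$, so \eqref{a+1k} holds. Writing $u_0=a/k$ we have $a+k=k(u_0+1)\ge k+u_0$, whence $p>a+k\ge k+u_0\ge\min(2u_0,k+u_0)$; thus \eqref{2u0} holds and Lemma~\ref{irupdate}(A) applies.

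The core is part~(ii). Assume $a\notin\mathcal{A}_p$, so that $a\le pr_p$ and $a\notin[ip-k,ip-1]$ for $1\le i\le r_p$. To verify \eqref{a+1k} it suffices, by the observation above, to exclude $a\in[ip-k,ip-1]$ for every $i\ge1$: the cases $1\le i\le r_p$ are given, while for $i\ge r_p+1$ one has $ip-k\ge(r_p+1)p-k>pr_p\ge a$ (using $p>k$, which follows from $p\ge k+2$), so no such interval contains $a$. For the size condition I would split on the definition of $r_p$. If $p\ge2k$ then $r_p=p-1$; here $p=2k$ is impossible for a prime with $p\ge k+2$ (for $k\ge2$, $2k$ is even and exceeds $2$, and for $k=1$ one has $p\ge3>2$), so $p>2k$, and $a\le pr_p=p^2-p$ is precisely \eqref{2k}. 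If $p<2k$ then $r_p=\lfloor k/2\rfloor$, so $u_0=a/k\le pr_p/k\le p/2$, giving $p\ge2u_0\ge\min(2u_0,k+u_0)$, which is \eqref{2u0}. In either case Lemma~\ref{irupdate}(A) shows $f_{n,a}(x)$ has no factor of degree $k$.

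Part~(iii) is then immediate: if $a\notin\mathcal{B}\{Q_1,\dots,Q_g\}=\bigcap_{l=1}^g\mathcal{A}_{Q_l}$, then $a\notin\mathcal{A}_{Q_l}$ for some $l$, and since $Q_l$ is one of the $P_i$ it is a prime $\ge k+2$ satisfying \eqref{cond}; applying part~(ii) with $p=Q_l$ concludes. The only slightly delicate points are translating non-membership in $\mathcal{A}_p$ into the divisibility statement \eqref{a+1k} (especially ruling out the indices $i>r_p$) and the remark that $p=2k$ cannot occur, so that $p\ge2k$ genuinely provides the strict inequality $p>2k$ required by \eqref{2k}; the rest is bookkeeping.
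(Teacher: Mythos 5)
Your proposal is correct and follows essentially the same route as the paper: deduce all three parts from Lemma~\ref{irupdate}(A) by checking \eqref{a+1k} via the interval structure of $\mathcal{A}_p$ and obtaining \eqref{2k} or \eqref{2u0} from $a\le pr_p$ according as $p\ge 2k$ or $p<2k$, with (iii) reduced to (ii). Your treatment is in fact slightly more careful than the paper's, since you explicitly rule out the intervals with $i>r_p$ and observe that a prime $p\ge k+2$ with $p\ge 2k$ must satisfy $p>2k$, points the paper leaves implicit.
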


In earlier results, Corollary \ref{Ircor} $(i)$ has been used. This is possible only if there is a
$p>k+a$ satisfying \eqref{cond}. But it is possible to apply Lemma \ref{irupdate} even when $p\le k+a$
for all $p$ satisfying \eqref{cond}. For example, take $n=15, a=13, k=3$. Here $p<k+a$ for all $p$
satisfying \eqref{cond}. However \eqref{cond}, \eqref{a+1k} and \eqref{2k} are satisfied with $p=13$ and
hence $f_{n, 13}(x)$ has no factor of degree $3$ by Lemma \ref{irupdate}.
\begin{proof}
$(i)$ is immediate from Lemma \ref{irupdate}.
Consider $(ii)$. We may assume that $p\le k+a$ by $(i)$. Let $a\notin \mathcal{A}_p$. Then
$a\le pr_p$ implying $a\le p^2-p$ if $p\ge 2k$ and $2u_0=\frac{2a}{k}\le \frac{2pr_p}{k}\le p$
if $p<2k$ satisfying either \eqref{2u0} or \eqref{2k}. Since $a\notin \mathcal{A}_p$, there is
some $i$ for which $ip-1<a<(i+1)p-k$ implying $ip<a+1<a+k<(i+1)p$. Therefore
$p\nmid \prod^k_{j=1}(a+j)$ which together with \eqref{cond} and $p\ge k+2$ satisfy the
conditions of Lemma \ref{irupdate}. Now the assertion follows by Lemma \ref{irupdate}.
The assertion $(iii)$ follows from $(ii)$.
\end{proof}

\section{Preliminaries for Theorems \ref{a3k}-\ref{a12}}

For a positive integer $\nu >1$, we denote by $\om(\nu)$ and $P(\nu)$ the number of distinct prime
factors and the greatest prime factor of $\nu$, respectively, and we put $\om(1)=0, P(1)=1$.
For positive integers $\nu$, we write
\begin{align*}
\pi(\nu)=&\sum_{p\le \nu} 1, \\
\theta(\nu)=&\sum_{p\le \nu} \log p.
\end{align*}
Let $p_i$ denote the $i-th$ prime.

We begin with some results on primes.

\begin{lemma}\label{pix} Let $k\in \Z$ and $\nu\in \R$. We have
\begin{itemize}
\item[(i)] $\pi(\nu)\geq \frac{\nu}{\log \nu -1}$ for $\nu \ge 5393$ and $\pi(\nu)\le
\frac{\nu}{\log \nu} \left(1+\frac{1.2762}{\log \nu}\right)$ for $\nu>1$.
\item[(ii)] $\pi(\nu_1+\nu_2) \le \pi(\nu_1)+\pi(\nu_2)$ for
$2\le \nu_1<\nu_2\le \frac{7}{5}\nu_1(\log \nu_1)(\log \log \nu_1)$.
\item[(iii)] $\nu(1-\frac{3.965}{\log^2 \nu})\le \theta (\nu)<1.00008\nu$ for $\nu>1$.
\item[(iv)] $p_k\ge k\log k \ {\rm for} \ k\ge 1$.
\item[(v)] {\rm ord}$_p((k-1)!) \geq \frac{k-p}{p-1}-\frac{\log (k-1)}{\log p}$ for $k\ge 2$.
\item [(vi)] $\sqrt{2\pi k}~e^{-k}k^{k}e^{\frac{1}{12k+1}} <k!<
\sqrt{2\pi k}~e^{-k}k^{k} e^{\frac{1}{12k}}$.
\end{itemize}
\end{lemma}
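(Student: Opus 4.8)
The plan is to treat this lemma as an assembly of standard explicit estimates from prime number theory together with one elementary valuation bound, so that most parts reduce to invoking known results while only (ii) and (v) call for an argument. First I would dispose of (v), the only genuinely self-contained item. Applying Legendre's formula in the form $\mathrm{ord}_p(m!)=\frac{m-s_p(m)}{p-1}$, where $s_p(m)$ is the sum of the base-$p$ digits of $m$, to $m=k-1$, and bounding $s_p(k-1)\le (p-1)(\lfloor \log_p(k-1)\rfloor +1)$ by (number of base-$p$ digits)$\times(p-1)$, one gets
\begin{align*}
\mathrm{ord}_p((k-1)!)\ge \frac{k-1}{p-1}-\lfloor \log_p(k-1)\rfloor-1\ge \frac{k-p}{p-1}-\frac{\log(k-1)}{\log p},
\end{align*}
using $\frac{k-1}{p-1}-1=\frac{k-p}{p-1}$ and $\lfloor \log_p(k-1)\rfloor\le \frac{\log(k-1)}{\log p}$. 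This settles (v), and one checks the boundary case $k=2$ directly.

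For the remaining analytic items I would cite the literature. Parts (i) and (iii) are the explicit Chebyshev-type bounds of Rosser and Schoenfeld, sharpened by Dusart: the lower bound $\pi(\nu)\ge \nu/(\log\nu-1)$ for $\nu\ge 5393$, the upper bound with constant $1.2762$, and the two-sided estimates on $\theta(\nu)$ are precisely Dusart's inequalities. For the small range $1<\nu<9$ left uncovered by the stated threshold, the upper bound in (i) holds trivially, since the right-hand side tends to $+\infty$ as $\nu\to 1^{+}$, and it is confirmed by inspection at the finitely many prime jumps below $9$. Part (iv), $p_k\ge k\log k$, is Rosser's classical lower bound for the $k$-th prime, and (vi) is Robbins' explicit form of Stirling's formula; both are quoted directly.

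The only item demanding real work, and hence the main obstacle, is the subadditivity (ii). I would rewrite $\pi(\nu_1+\nu_2)-\pi(\nu_2)$ as the number of primes in $(\nu_2,\nu_1+\nu_2]$ and bound it from above by combining the upper estimate of (i) at $\nu_1+\nu_2$ with the lower estimate at $\nu_2$; it then suffices to show that this difference is at most the lower bound $\pi(\nu_1)\ge \nu_1/(\log\nu_1-1)$, which reduces (ii) to a single inequality in the real variables $\nu_1,\nu_2$. The hypothesis $\nu_2\le \frac{7}{5}\nu_1(\log\nu_1)(\log\log\nu_1)$ is exactly the range in which this inequality closes, and the delicate point is to verify that the logarithmic savings dominate the $\log\log$ factor throughout that range. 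Since $\nu_1<\nu_2$, the analytic argument applies whenever $\nu_1\ge 5393$ (forcing $\nu_2\ge 5393$ as well); for $\nu_1<5393$ the constraint confines $\nu_1+\nu_2$ to a bounded interval, so (ii) becomes a finite verification at the prime jumps of $\pi$, which I would discharge computationally.
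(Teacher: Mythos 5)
Your treatment of five of the six parts matches the paper or improves on it. Parts (i), (iii), (iv), (vi) are handled in the paper exactly as you propose, by quoting Dusart \cite{dus1,dus2}, Rosser \cite{Ros} and Robbins \cite{robb}. For (v) the paper merely cites \cite[Lemma 2(i)]{shanta2}, whereas your Legendre-formula argument is a correct self-contained proof: from $\mathrm{ord}_p((k-1)!)=\frac{(k-1)-s_p(k-1)}{p-1}$ and $s_p(k-1)\le (p-1)\left(\lfloor \log_p(k-1)\rfloor+1\right)$ one gets $\mathrm{ord}_p((k-1)!)\ge \frac{k-p}{p-1}-\lfloor \log_p(k-1)\rfloor\ge \frac{k-p}{p-1}-\frac{\log(k-1)}{\log p}$, valid down to $k=2$.

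The genuine gap is in (ii). You propose to bound $\pi(\nu_1+\nu_2)-\pi(\nu_2)$ by the upper bound of (i) at $\nu_1+\nu_2$ minus the lower bound of (i) at $\nu_2$, and to show this difference is at most $\nu_1/(\log \nu_1-1)$. That sufficient inequality is \emph{false} near the top of the permitted range for all large $\nu_1$, so it cannot be closed by any computation, finite or not. Structurally: the slack between the two explicit bounds at the same point is
\begin{align*}
\frac{\nu}{\log \nu}\left(1+\frac{1.2762}{\log \nu}\right)-\frac{\nu}{\log \nu-1}\approx \frac{0.2762\,\nu}{\log^2\nu},
\end{align*}
and with $\nu_2\approx \frac{7}{5}\nu_1(\log \nu_1)(\log\log \nu_1)$ this slack alone is $\approx 0.39\,\nu_1\frac{\log\log \nu_1}{\log \nu_1}$, which exceeds $\pi(\nu_1)\approx \nu_1/\log \nu_1$ as soon as $\log\log \nu_1\gtrsim 2.6$, i.e.\ $\nu_1\gtrsim 6\cdot 10^5$. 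Concretely, at $\nu_1=e^{20}$ and $\nu_2=\frac75\nu_1(\log\nu_1)(\log\log\nu_1)\approx 83.9\,\nu_1$, your majorant is about $3.654\,\nu_1-3.580\,\nu_1=0.074\,\nu_1$, while $\nu_1/(\log\nu_1-1)\approx 0.053\,\nu_1$. The underlying obstruction is that subtracting an upper estimate from a lower estimate can never resolve a quantity (here $\approx \nu_1/\log \nu_2$) that is comparable to, or smaller than, the intrinsic uncertainty $0.2762\,\nu_2/\log^2\nu_2$ of those estimates, and in the stated range the uncertainty wins. Part (ii) is in fact precisely the theorem of Dusart's paper \cite{dus2} (``Sur la conjecture $\pi(x+y)\le \pi(x)+\pi(y)$''), and the paper disposes of it, like (i) and (iii), by quotation; any proof from scratch requires substantially sharper inputs (higher-order explicit bounds for $\pi$, or Brun--Titchmarsh/Montgomery--Vaughan type estimates for primes in short intervals), not the bounds of part (i).
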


The estimates $(i), (ii)$ and $(iii)$ are due to Dusart (\cite{dus1} and \cite{dus2}, respectively). The
estimate $(iv)$ is due to Rosser \cite{Ros} and estimate $(vi)$ is due to Robbins \cite[Theorem 6]{robb}.
For a proof of $(v)$, see \cite[Lemma 2(i)]{shanta2}.
\qed

We derive from Lemma \ref{pix} the following results.

\begin{coro}\label{n<<}
Let $10^{10}<m\le 123k$. Then there are primes $p, q$ with
$m\le p<m+k$ and $\frac{m}{2}\le q<\frac{m+k}{2}$.
\end{coro}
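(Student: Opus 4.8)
The plan is to produce each of the two primes by showing that the corresponding increment of Chebyshev's function $\theta$ is strictly positive, which will be forced by the sharp estimate $\nu(1-\frac{3.965}{\log^2\nu})\le\theta(\nu)<1.00008\nu$ of Lemma \ref{pix}(iii). From $m\le 123k$ we get $k\ge\frac{m}{123}$, so both target intervals $[m,m+k)$ and $[\frac{m}{2},\frac{m+k}{2})$ have relative length at least $\frac{1}{123}$; in particular $m+k\ge\frac{124}{123}m$. Since $m>10^{10}$, every argument occurring below lies well inside the range of validity of the estimates, and the two intervals are so long that the $\theta$-increments, once shown positive, will exceed $\log(m+k)$ many times over, so the endpoint and integrality adjustments (replacing $m,m+k$ by $\lceil m\rceil,\lfloor m+k\rfloor$, etc.) are harmless.

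For the prime $p$ I would bound
\begin{align*}
\theta(m+k)-\theta(m)\ge (m+k)\Bigl(1-\frac{3.965}{\log^2(m+k)}\Bigr)-1.00008\,m .
\end{align*}
The right side is increasing in $m+k$ (its derivative with respect to $m+k$ is $1-3.965(\log(m+k)-2)/\log^3(m+k)>0$ in this range), so the worst case is the smallest admissible $k$, namely $m+k=\frac{124}{123}m$. After dividing by $m$ this reduces to a one-variable inequality in $L=\log m$ whose left side is increasing in $L$; hence it suffices to check it at $L=\log 10^{10}$, where the bracketed coefficient of $m$ comes out to roughly $5\times10^{-4}>0$. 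This yields a prime $p$ with $m\le p<m+k$.

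The prime $q$ is the genuinely tight case and the main obstacle. Running the same argument on the halved interval gives
\begin{align*}
\theta\Bigl(\tfrac{m+k}{2}\Bigr)-\theta\Bigl(\tfrac{m}{2}\Bigr)\ge \frac{m+k}{2}\Bigl(1-\frac{3.965}{\log^2((m+k)/2)}\Bigr)-1.00008\,\frac{m}{2}.
\end{align*}
Here $\log((m+k)/2)$ is smaller than $\log(m+k)$, so the correction term $\frac{3.965}{\log^2(\cdot)}$ is larger and the surplus shrinks dramatically: at $m=10^{10}$ and $m+k=\frac{124}{123}m$ the bracketed coefficient is only about $4\times10^{-5}$, positive but razor-thin. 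Two points make the proof go through: first, this coefficient is again increasing in $m$, so positivity at the left endpoint $m=10^{10}$ propagates to all larger $m$; second, one is forced to use the $\theta$-estimate of (iii) rather than the cruder $\pi$-estimate of (i), since the surplus furnished by the latter is in fact negative for $m$ near $10^{10}$ and would deliver neither prime. Thus the only delicate computation is verifying the two boundary inequalities at $m=10^{10}$ with the exact constants $124/123$, $3.965$ and $1.00008$, together with the monotonicity in $k$ (worst case minimal) and in $m$; doing so completes the proof.
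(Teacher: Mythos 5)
Your proposal is correct and takes essentially the same route as the paper: both prove positivity of the increments $\theta\bigl(\tfrac{m+k}{s}\bigr)-\theta\bigl(\tfrac{m}{s}\bigr)$ for $s=1,2$ using the $\theta$-estimate of Lemma \ref{pix} (iii) (correctly recognizing that the $\pi$-estimates are too weak here), and reduce via $m\le 123k$ to a numerical check at the boundary $m=10^{10}$. The only cosmetic differences are that the paper works with $\tfrac{m-1}{s}$ and $\tfrac{m+k-1}{s}$ to settle the integrality/endpoint issues that you handle by noting the surplus dwarfs $\log(m+k)$, and it replaces $\log^2\nu$ by the uniform worst case $\log^2(5\cdot 10^9)$ rather than invoking monotonicity in $m+k$ and $m$.
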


\begin{proof}
Let $10^{10}<m\le 123k$. We observe that the assertion holds if
\begin{align*}
\theta(\frac{m+k-1}{s})-\theta(\frac{m-1}{s})=\sum_{\frac{m-1}{s}<p\le \frac{m+k-1}{s}}\log p >0
\end{align*}
for $s=1, 2$. Now from Lemma \ref{pix} and since $m>10^{10}$, it suffices to show
\begin{align*}
\theta(\frac{m+k-1}{s})-\theta(\frac{m-1}{s})>\frac{m+k-1}{s}
\left(1-\frac{3.965}{\log^2 (5\cdot 10^9)}\right)-1.00008\frac{m-1}{s}>0
\end{align*}
or
\begin{align*}
k(1-\frac{3.965}{\log^2 (5\cdot 10^9)})>(m-1)(\frac{8}{10^5}+\frac{3.965}{\log^2 (5\cdot 10^9)}).
\end{align*}
This is true since $m\le 123k$ and
\begin{align*}
\frac{1-\frac{3.965}{\log^2 (5\cdot 10^9)}}{\frac{8}{10^5}+\frac{3.965}{\log^2 (5\cdot 10^9)}}>123.
\end{align*}
\end{proof}

\begin{coro}\label{<4k8000}
We have
\begin{align}\label{<4k80001}
\pi(k)+\pi(\frac{k}{2})+\pi(\frac{k}{3})+\pi(\frac{k}{4})+\pi(\frac{6k}{5})\le \begin{cases}
k-2 & {\rm for} \ k\ge 61\\
\pi(4k) \ & {\rm for} \ k\ge 8000.
\end{cases}
\end{align}
\end{coro}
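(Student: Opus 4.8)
The plan is to bound the left-hand side from above, and (for the second inequality) the right-hand side from below, using the explicit estimates of Lemma \ref{pix}$(i)$, so that each assertion is reduced to an elementary inequality in $\log k$ valid for all large $k$, and then to settle the remaining bounded range of $k$ by direct computation. Write $S(k)=\pi(k)+\pi(\frac{k}{2})+\pi(\frac{k}{3})+\pi(\frac{k}{4})+\pi(\frac{6k}{5})$. Each of the five arguments is at least $\frac{k}{4}$, so their logarithms are positive and the upper bound $\pi(\nu)\le \frac{\nu}{\log \nu}\left(1+\frac{1.2762}{\log \nu}\right)$ applies to every summand. Summing, and noting that the five coefficients have total weight $1+\frac12+\frac13+\frac14+\frac65=\frac{197}{60}$, the main term of the resulting majorant is $\frac{197}{60}\cdot\frac{k}{\log k}$, with lower-order corrections coming from the differing logarithms and the $\frac{1.2762}{\log \nu}$ factors.

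For the first inequality I would compare this majorant with $k-2$. Since $\frac{k}{\log k}=o(k)$, the majorant is eventually dominated by $k-2$, so the analytic estimate yields $S(k)\le k-2$ for all $k\ge K_0$ with some explicit, fairly moderate $K_0$. The finitely many values below $K_0$ are then verified directly; this is where the real content lies, because the inequality is close to sharp near its threshold and the discrete fluctuations of $\pi$ must be tracked by evaluating $S$ at the integers where one of the five arguments crosses a prime.

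For the second inequality I would instead bound $\pi(4k)\ge \frac{4k}{\log(4k)-1}$, which is legitimate since $4k\ge 32000>5393$ when $k\ge 8000$. After dividing by $k$, the claim $S(k)\le \pi(4k)$ reduces to an inequality of the shape
\[
\frac{197}{60}\cdot\frac{1}{\log k}\,(1+o(1))\ \le\ \frac{4}{\log(4k)-1},
\]
which holds asymptotically because $\frac{197}{60}<4$. However the two constants are close, and the second-order terms (the $\frac{1.2762}{\log \nu}$ factors on the left and the $-1$ in the denominator on the right) delay the onset of genuine slack until $k$ is well beyond $8000$; indeed at $k=8000$ the inequality is nearly an equality, so no argument using only the bounds of Lemma \ref{pix}$(i)$ can succeed there. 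The main obstacle is therefore the finite verification on the range $8000\le k<K_1$, where $K_1$ is the analytic crossover. I would make this tractable by exploiting that both $S(k)$ and $\pi(4k)$ are non-decreasing step functions: on each interval where $\pi(4k)$ is constant it suffices to test $S$ at the largest $k$ in the interval, i.e. just before $4k$ meets the next prime, reducing the task to a finite and computer-checkable set of breakpoints.
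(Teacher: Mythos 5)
Your proposal takes essentially the same route as the paper: the paper also combines the explicit estimates of Lemma \ref{pix}$(i)$ (proving the $\pi(4k)$ bound analytically only for $k\ge 30000$, after rewriting all the $\frac{\nu}{\log\nu}$ terms over the common denominator $\log 4k$) with a verification by exact values of $\pi$ on the remaining finite range, and your observation that the explicit bounds alone cannot succeed near $k=8000$ is precisely why the paper needs that computation on $8000\le k<30000$. The only cosmetic differences are that the paper obtains the $k-2$ bound for $k\ge 8000$ by the chain $S(k)\le \pi(4k)\le \frac{4k}{\log 4k}\left(1+\frac{1.2762}{\log 4k}\right)\le k-2$ rather than bounding the sum directly, and it does not spell out your breakpoint trick for organizing the finite check.
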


\begin{proof}
Let $k\ge 30000$. We have from $\frac{\log y}{\log x}=1+\frac{\log y/x}{\log x}$ and
Lemma \ref{pix} $(i)$ that
\begin{align*}
&(\log 4k)\left(\pi(4k)-\pi(\frac{6k}{5})-\pi(k)-\pi(\frac{k}{2})-\pi(\frac{k}{3})-
\pi(\frac{k}{4})\right)\\
\ge &\frac{4k}{\log 4k-1}+\\
&k\left(4-\frac{6}{5}\left(1+\frac{\log \frac{10}{3}}{\log \frac{6k}{5}}\right)
\left(1+\frac{1.2762}{\log \frac{6k}{5}}\right)-
\sum^{4}_{j=1}\frac{1}{j}\left(1+\frac{\log 4j}{\log \frac{k}{j}}\right)
\left(1+\frac{1.2762}{\log \frac{k}{j}}\right)\right).
\end{align*}
The right hand side of the above inequality is an increasing function of $k$ and it is positive
at $k=30000$. Therefore the left hand side of \eqref{<4k80001} is at most $\pi(4k)$ for $k\ge 30000$. By
using exact values, we find that it is valid for $k\ge 8000$.

Also $\pi(4k)\le \frac{4k}{\log 4k}\left(1+\frac{1.2762}{\log 4k}\right)\le k-2$ is true for $k\ge 8000$.
Therefore the left hand side of \eqref{<4k80001} is at most $k-2$ for $k\ge 8000$. Finally we check using
exact values of the $\pi-$function that the left hand side of \eqref{<4k80001} is at most $k-2$ for
$61\le k<8000$.
\end{proof}

The following result is on Grimm's Conjecture, \cite[Theorem 1]{grim}. Grimm's Conjecture
states that \emph{given integers $n\ge 1$ and $k\ge 1$ such that whenever $n+1, \cdots , n+k$ are
all composite numbers, we can find distinct primes $P_i$ with $P_i|(n+i)$ for $1\le i\le k$.}
This is a difficult conjecture having several interesting consequences.
For example, this conjecture implies $p_{i+1}-p_i<p^{0.46}_i$ for sufficiently large $i$, a result
better than that given by Riemann hypothesis. This follows by taking $n=p_i$ in \cite[Theorem 1(i)]{lmgrim}.
We refer to \cite{rstgrim} and \cite{lmgrim} for a survey and results on Grimm's Conjecture.

\begin{lemma}\label{grim}
Let $m\le 1.9\cdot 10^{10}$ and $l\ge 1$ be such that $m+1, m+2, \cdots , m+l$ are all
composite numbers. Then there are distinct primes $P_i$ such that
$P_i|(m+i)$ for each $1\le i\le l$.
\end{lemma}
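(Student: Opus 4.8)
The statement is the finite verification of Grimm's Conjecture for $m\le 1.9\cdot 10^{10}$, recorded as \cite[Theorem 1]{grim}; the plan is to recast it as a bipartite matching problem and then exploit the extreme sparsity of the integers that can obstruct a matching. First I would reformulate the conclusion through Hall's marriage theorem: distinct primes $P_i$ with $P_i\mid(m+i)$ exist precisely when, for every subset $S\subseteq\{1,2,\ldots,l\}$, the number of primes dividing $\prod_{i\in S}(m+i)$ is at least $|S|$. This turns the problem into verifying Hall's condition for the bipartite graph joining each composite $m+i$ to its prime divisors.

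The next step is a finiteness reduction. Since $m+1,\ldots,m+l$ are consecutive composites, they lie inside a maximal run of composite integers, i.e.\ a gap between two consecutive primes below $1.9\cdot 10^{10}+l$. If one produces distinct prime representatives for the full maximal run, then restricting to any contiguous sub-run again yields distinct representatives, so it suffices to treat maximal runs. The tabulated record prime gaps show that every gap below $1.9\cdot 10^{10}$ has length at most a few hundred, so $l$ is bounded by an explicit modest constant; this is the crucial reduction that makes a verification feasible.

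To check Hall's condition I would assign to each $m+i$ its greatest prime factor $P(m+i)$. If $P(m+i)=P(m+j)=p$ with $i\ne j$, then $p$ divides $(m+j)-(m+i)$, forcing $p\le l$; hence every collision is caused by an $l$-smooth integer in the window. The number of $l$-smooth integers in an interval of length $l$ with $m+l\le 1.9\cdot 10^{10}$ is tiny, the genuinely dangerous entries being prime powers $p^{e}$ with $p\le l$, of which there are very few. Every non-smooth entry therefore receives a private prime $>l$ automatically (such a prime divides at most one element of the run), and the residual matching lives on the small subgraph of smooth entries, which must be matched to distinct small primes $\le l$. That residual Hall's condition can be certified directly, using Lemma \ref{pix} to guarantee that the supply of primes $\le l$ strictly exceeds the number of smooth entries demanding them.

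The hard part will be making this last step unconditional rather than merely heuristic: one must guarantee, for \emph{every} maximal composite run below $1.9\cdot 10^{10}$, that its handful of smooth entries can be simultaneously matched to distinct small primes. I expect to discharge this by an explicit computation over all prime gaps up to $1.9\cdot 10^{10}$, which is practical because short runs are settled instantly by the largest-prime-factor assignment and only the rare long gaps containing several smooth numbers require the Hall-type check. The delicate point throughout is that distinct prime powers and other smooth numbers may compete for the same small primes, so the counting estimate must be tight enough to certify that demand never overtakes supply.
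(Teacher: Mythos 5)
A preliminary remark on the comparison you are asking for: the paper contains no proof of Lemma \ref{grim} at all --- it is quoted verbatim from \cite[Theorem 1]{grim} --- so your attempt can only be measured against that external verification (and against the smooth-number formulation of Grimm's conjecture developed in \cite{lmgrim}), not against an argument inside this paper. Your preparatory reductions are correct and are indeed the standard ones: Hall's marriage theorem turns the assertion into the condition $\omega\bigl(\prod_{i\in S}(m+i)\bigr)\ge |S|$ for every $S\subseteq\{1,\dots,l\}$; it suffices to treat maximal runs of composites, so exhaustive prime-gap computations below $1.9\cdot 10^{10}$ bound $l$ by $382$; and since a prime $p>l$ divides at most one term of the run, every term with $P(m+i)>l$ receives a private large prime, so the problem localizes to the $l$-smooth terms.

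The gap is in how you finish. Certifying the residual Hall condition ``using Lemma \ref{pix} to guarantee that the supply of primes $\le l$ strictly exceeds the number of smooth entries'' is not a proof and cannot be repaired by tightening the count: Hall's condition fails precisely when some family of $s\ge 2$ smooth terms has \emph{joint} prime support of size less than $s$, and this can happen no matter how many unused primes $\le l$ exist. Three terms of the form $2^{a}3^{b}$ inside one run would violate the condition even though $\pi(382)=75$ primes are ``available,'' so the abundance of small primes (which is trivially true in this range) is the wrong invariant. What must be excluded is a deficient family: for $s=2$ this means two powers of the same prime, which is impossible because two such powers exceeding $m$ differ by more than $m$, and $m>l$ by Bertrand's postulate; but for $s\ge 3$ there is no cheap structural argument, and one genuinely needs the factorizations. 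Consequently your ``residual check'' is not a corollary of an estimate --- it is the entire content of the theorem: one must enumerate all $382$-smooth integers up to $1.9\cdot 10^{10}$ (on the order of $10^{8}$ of them), locate each inside its surrounding prime gap, and verify Hall's condition on every run containing at least two of them. Likewise your claim that short runs are ``settled instantly'' already presupposes knowing which of their entries are smooth, i.e.\ sieving the whole range. Such a computation is feasible, and some computation of this kind is unavoidable in any proof of the lemma, but as written your proposal replaces the crux by a supply-versus-demand count that does not deliver it, and the appeal to Lemma \ref{pix} at that point should be discarded.
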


The following result follows from \cite[Lemma 3]{Sar}.

\begin{lemma}\label{k3/2}
Let $m+k-1<k^{\frac{3}{2}}$. Let $|\{i: P(m+i)\le k\}|=\mu$. Then
\begin{align*}
\binom{m+k-1}{k}\leq (2.83)^{k+\sqrt{m+k-1}}(m+k-1)^{k-\mu}.
\end{align*}
\end{lemma}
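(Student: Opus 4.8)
The plan is to run the Sylvester--Erd\H{o}s smooth/rough decomposition of the binomial coefficient and to reduce the claim to a Chebyshev-type estimate for its $k$-smooth part. Write $N=m+k-1$ and
\[
\binom{m+k-1}{k}=\frac{\Delta(m,k)}{k!},\qquad \Delta(m,k)=\prod_{i=0}^{k-1}(m+i),
\]
and factor each $m+i=u_iv_i$, where $u_i$ is its largest divisor composed of primes $\le k$ and $v_i$ collects the primes $>k$. Since $N<k^{3/2}<k^2<p^2$ for every prime $p>k$, no such $p$ divides $m+i$ to a power exceeding the first, and, as $p>k$ exceeds the length $k-1$ of the block, $p$ divides at most one of the $k$ consecutive integers. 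Hence every prime $>k$ occurs to the first power in $\binom{m+k-1}{k}$, the rough part $R:=\prod_i v_i$ is exactly the product of these large primes, and $v_i>1$ holds for precisely the $k-\mu$ indices with $P(m+i)>k$. As $v_i\le m+i\le N$ this gives $R\le N^{k-\mu}$. Writing $S:=\binom{m+k-1}{k}/R=\prod_{p\le k}p^{{\rm ord}_p\binom{m+k-1}{k}}$ for the complementary $k$-smooth part, it therefore suffices to prove $S\le (2.83)^{k+\sqrt{N}}$.

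Next I would bound $S$ by splitting the primes at $\sqrt N$ (a genuine split, since $N<k^{3/2}$ forces $\sqrt N<k^{3/4}<k$). For $\sqrt N<p\le k$ we have $p^2>N$, so at most one carry occurs and ${\rm ord}_p\binom{m+k-1}{k}\le 1$; these primes contribute at most $\prod_{\sqrt N<p\le k}p=\exp\bigl(\theta(k)-\theta(\sqrt N)\bigr)$. For $p\le\sqrt N$ I use the crude bound ${\rm ord}_p\binom{m+k-1}{k}\le\lfloor\log_p N\rfloor$, so this range contributes at most $\prod_{p\le\sqrt N}p^{\lfloor\log_p N\rfloor}$; writing $\lfloor\log_p N\rfloor\log p=\sum_{a\ge1,\,p^a\le N}\log p$ and interchanging summation shows its logarithm equals $\theta(\sqrt N)+\sum_{a\ge2}\theta(N^{1/a})$. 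Combining the two ranges gives
\[
\log S\le \theta(k)+\sum_{a\ge 2}\theta\bigl(N^{1/a}\bigr),
\]
the point being that the $-\theta(\sqrt N)$ from the upper range cancels one of the two copies of $\theta(\sqrt N)\approx\sqrt N$ produced by the small primes, leaving a single $\sqrt N$ rather than the naive $2\sqrt N$.

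It then remains to feed in the explicit estimates of Lemma \ref{pix}. Dusart's bounds give $\theta(k)<1.00008\,k$ and $\theta(N^{1/a})<1.00008\,N^{1/a}$, while the tail $N^{1/3}+N^{1/4}+\cdots$ is $O\bigl(N^{1/3}\log N\bigr)$, so $\log S\le 1.00008\,(k+\sqrt N)+O\bigl(N^{1/3}\log N\bigr)$. Since $\log 2.83>1.0403$ comfortably exceeds $1.00008$, and since the hypothesis $N<k^{3/2}$ forces $k>N^{2/3}$ and hence a slack of order $N^{2/3}$ in the $k$-term, the lower-order contributions are absorbed and $\log S\le (k+\sqrt N)\log 2.83$, which is the required inequality.

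The main obstacle is exactly this final constant-chasing: for small and moderate $N$ the secondary terms $N^{1/3},N^{1/4},\dots$, together with the imprecision in the $\theta$- and $\pi$-estimates, are not negligible against the $N^{2/3}$ slack, so the clean asymptotic argument must be supplemented by a verification over a bounded range using exact values of the prime-counting functions. This finite check is precisely what is packaged in \cite[Lemma 3]{Sar}, from which the stated inequality follows directly; alternatively one simply specialises that lemma rather than rederiving the smooth-part bound by hand.
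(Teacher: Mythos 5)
The paper offers no argument for this lemma at all: its complete ``proof'' is the single sentence that the result follows from \cite[Lemma 3]{Sar}. Since your proposal ends by invoking exactly that citation, it agrees with the paper in the only respect the paper commits to, and in that sense it is acceptable.

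Your standalone sketch, however, should not be mistaken for a proof, and the gap sits exactly where you place it but is larger than you suggest. The structural part is right: every prime $p>k$ divides at most one of the $k$ terms and only to the first power (since $p^2>k^2>N$ with $N=m+k-1$), so the rough part of $\binom{N}{k}$ is at most $N^{k-\mu}$; and the valuation count giving $\log S\le\theta(k)+\sum_{a\ge2}\theta\bigl(N^{1/a}\bigr)$, including the cancellation of one copy of $\theta(\sqrt N)$, is correct. But the closing claim that the slack $(\log 2.83-1.00008)(k+\sqrt N)\approx 0.04(k+\sqrt N)$ ``absorbs'' the tail is false throughout a very large range. When $k$ is near its minimum $N^{2/3}$, the slack is about $0.04\bigl(N^{2/3}+\sqrt N\bigr)$, while your tail bound is of size $N^{1/3}+N^{1/4}\log_2 N$ at best: at $N=10^6$, $k=10^4$ this reads roughly $730$ against $440$, and with the cruder bound $N^{1/3}\log_2 N$ the failure persists to $N$ of order $10^8$. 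In that whole range the desired inequality is rescued only by the fact that $\theta(k)$, $\theta(\sqrt N)$ and each tail term lie well below their linear majorants (e.g. $\theta(10^4)\approx 9896$, not $10^4$), so what remains is a genuine verification over all $N$ up to roughly $10^8$ and all $k\in(N^{2/3},N]$ using exact values of $\theta$ --- a substantial computation, not a formality about ``small and moderate $N$''. You concede this and outsource it to \cite[Lemma 3]{Sar}; that is legitimate, but it means your derivation is motivation for, rather than a replacement of, the citation that constitutes the paper's entire proof.
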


\section{An upper bound for $m$ when $\om(\Delta (m, k))\le t$}\label{omdbd}

Let $m, k$ and $t$ be positive integers such that
\begin{align}\label{piDk}
\om(\Delta (m, k))\le t.
\end{align}
For every prime $p$ dividing $\Delta(m, k)$, we delete a term $m+i_p$ in $\Delta(m, k)$
such that ord$_p(m+i_p)$ is maximal. Then we have a set $T$ of terms in
$\Delta(m, k)$ with
\begin{align*}
|T|=k-t:=t_0.
\end{align*}
We arrange the elements of $T$ as $m+i_1<m+i_2<\cdots <m+i_{t_0}$. Let
\begin{align}\label{P0}
{\frak P}:=\displaystyle{\prod^{t_0}_{\nu =1}} (m+i_{\nu}) 
\ge m^{t_0}.
\end{align}

Now we obtain an upper bound for ${\frak P}$. For a prime $p$, let $r$ be
the highest power of $p$ such that $p^r\le k-1$ and let $i_0$ be such that
ord$_p(m+i_0d)$ is maximal. Let $w_l=|\{m+i: p^l|(m+i), m+i\in T\}|$
for $1\le l\le r$. By an argument that was first given by Sylvester and Erd\H{o}s(see \cite{}),
we have $w_l\le [\frac{i_0}{p^l}]+[\frac{k-1-i_o}{p^l}]\le [\frac{k-1}{p^l}]$.
Let $h_p>0$ be such that
$[\frac{k-1}{p^{h_p+1}}]\le t_0<[\frac{k-1}{p^{h_p}}]$. Then there are at most
$t_0-w_{h_p+1}$ terms in $T$ exactly divisible by $p^l$ with $l\le h_p$. Hence
\begin{align*}
{\rm ord}_p({\frak P})&\le rw_r +
\sum^{r-1}_{u=h_p+1}u(w_u-w_{u+1})+h_p(t_0-w_{h_p+1})\\
&=w_r+w_{r-1}+\cdots +w_{h_p+1}+h_pt_0\\
&\le \sum^{r}_{u=1}\lf\frac{k-1}{p^u}\rf +h_pt_0-
\sum^{h_p}_{u=1}\lf \frac{k-1}{p^u}\rf ={\rm ord}_p((k-1)!)+
h_pt_0-\sum^{h_p}_{u=1}\lf \frac{k-1}{p^u}\rf .
\end{align*}
It is also easy to see that ord$_p({\frak P})\le $ord$_p((k-1)!)$. Let
$L_0(p)=$min$(0, h_pt_0-\sum^{h_p}_{u=1}\lf \frac{k-1}{p^u}\rf )$. For any $l\ge 1$,
we have from \eqref{P0} that
\begin{align}\label{Lbd}
m\le \left({\mathfrak P}\right)^{\frac{1}{t_0}} \le
\left( (k-1)! \prod_{p\le p_l}p^{L_0(p)}\right)^{\frac{1}{t_0}}=:L(k, l).
\end{align}
Observe that
\begin{align}\label{E0}
m^{t_0}\le (L(k, l))^{t_0}\le (k-1)!.
\end{align}

\section{Prelude to the proof of Theorems \ref{a3k}-\ref{a12}}

Let $k\ge 2$, $n\ge 2k$, $a\ge 0, m=n+a-k+1$ and $|a_0a_n|=1$. Then $m>k+a$.
We consider the polynomials $f_{n, a}(x)$ with
$3<a\le 40$ when $k=2$; $10<a\le 50$ when $k\in \{3, 4\}$ and $\max(30, 1.5k)<a\le \max(50, 5k)$ when $k\ge 5$.
Let $P_1>P_2>\ldots >P_s\ge k+2$ be primes dividing $\Delta(m, k)$. We write $P_{m, k}=\{P_1, P_2, \ldots ,P_s\}$.
We use Corollaries \ref{ircor3} and \ref{Ircor} to apply the following procedure which we refer
to as \emph{Procedure $\mathcal{R}$}.

\noindent
{\bf Procedure $\mathcal{R}$:} Let $k$ be fixed. For all $a$ with $3<a\le 40$ if $k=2$;
$10<a\le 50$ if $k\in \{3, 4\}$ and $\max(30, 1.5k)<a\le \max(50, 5k)$ if $k\ge 5$, it suffices
to consider only $(m, k, a)$ with $P_1\le k+a$ by Corollary \ref{Ircor} $(i)$.
We restrict to such triples $(m, k, a)$ with $P_1\le k+a$. By Corollary \ref{Ircor} $(iii)$,  we have
$a\in \B_0(m, k):=\B\{P_1, P_2, \ldots ,P_s\}$. Therefore we further restrict to $(m, k, a)$ with
$a\in \B_0(m, k)$. Further for $k\in \{2, 3, 4, 5\}$ and $p=5\in P_{m, k}$ if $k=2$;
$p=5\in P_{m, k}$ or $p=7\in P_{m, k}$ if $k=3$ and $p=7\in P_{m, k}$ if $k\in \{4, 5\}$,
we restrict to those $(m, k, a)$ with $a\notin \A_{k, p}$
by using Corollary \ref{ircor3} and recalling $n=m+k-1-a$. Every $(m, k, a)$ gives rise
to the triplet $(n, k, a)$.

We try to exclude the triplets $(n, k, a)$ given by \emph{Procedure $\mathcal{R}$} to prove our theorems.

Let
\begin{align*}
\om_0(a)=\begin{cases}
\pi(a+k) & {\rm if} \ a\le k+1\\
\sum^2_{j=1}\left(\pi(\frac{a+k}{j})-\pi(\max(k+1, \frac{a}{j}))\right)+\pi(k+1) & {\rm if} \ k+1<a\le 2k+2\\
\sum^3_{j=1}\left(\pi(\frac{a+k}{j})-\pi(\max(k+1, \frac{a}{j}))\right)+\pi(k+1) & {\rm if} \ 2k+2<a\le 3k+3\\
\sum^4_{j=1}\left(\pi(\frac{a+k}{j})-\pi(\max(k+1, \frac{a}{j}))\right)+\pi(k+1) & {\rm if} \ 3k+3<a\le 4k+4\\
\sum^5_{j=1}\left(\pi(\frac{a+k}{j})-\pi(\max(k+1, \frac{a}{j}))\right)+\pi(k+1) & {\rm if} \ 4k+4<a\le 5k
\end{cases}
\end{align*}
and $\om_1$ be the maximum of $\om_0(a)$ for $1.5k<a\le 5k$. Then $\om(\Delta(a+1, k))\le \om_1$.

Let $k\ge 10$. Assume that $\om(\Delta(m, k))>\om_1$. Then there is a prime
$p\ge k+2$ with $p|\Delta(m, k)$ such that $p\nmid \Delta(a+1, k)$ and $p\nmid a_0a_n$. Further
$p\ge 13>2u_0$ since $u_0\le 5$. Hence $f(x)$ has no factor of degree $k$ by Lemma \ref{irupdate}.
Therefore we may suppose that
\begin{align}\label{1+om1}
\om(\Delta(m, k))\le \om_1 \ {\rm for} \ k\ge 10.
\end{align}

Let $k\ge 100$. Let $(i-1)(k+1)<a\le i(k+1)$ with $1\le i\le 5$. For $1\le j<i$, we have
$\frac{a}{j}>\frac{k}{j}\ge \frac{100}{4}$ implying
$\frac{\frac{a}{j}}{\frac{k}{j}}=\frac{a}{k}\le 5\le \frac{7}{5}\log(25)\log\log(25)
\le \frac{7}{5}\log(\frac{k}{j})\log\log(\frac{k}{j})$. Hence
$\pi(\frac{a+k}{j})-\pi(\frac{a}{j})\le \pi(\frac{k}{j})$ for $1\le j<i$ by Lemma \ref{pix} $(ii)$.
Therefore
\begin{align*}
\om_0(a)\le \begin{cases}
\pi(k+k+1) & {\rm if} \ a\le k+1\\
\pi(k)+\pi(\frac{k}{2}+k+1) & {\rm if} \ k+1<a\le 2k+2\\
\pi(k)+\pi(\frac{k}{2})+\pi(\frac{k}{3}+k+1) & {\rm if} \ 2k+2<a\le 3k+3\\
\pi(k)+\pi(\frac{k}{2})+\pi(\frac{k}{3})+\pi(\frac{k}{4}+k+1) & {\rm if} \ 3k+3<a\le 4k+4\\
\pi(k)+\pi(\frac{k}{2})+\pi(\frac{k}{3})+\pi(\frac{k}{4})+\pi(\frac{k}{5}+k) & {\rm if} \ 4k+4<a\le 5k
\end{cases}
\end{align*}
which, again by Lemma \ref{pix} $(ii)$, implies
\begin{align}\label{om2}
\om_1\le \pi(k)+\pi(\frac{k}{2})+\pi(\frac{k}{3})+\pi(\frac{k}{4})+\pi(\frac{6k}{5})
=:\om_2 \ {\rm for} \ k\ge 100.
\end{align}

Let $N_1(p)=\{N: P(N(N-1))\le p\}$ and $N_2(p)=\{N: P(N(N-2))\le p, N \ {\rm odd}\}$. Then
$N_1$ and $N_2$ are given by \cite[Table IA]{lehmer} for $p\le 41$ and \cite[Table IIA]{lehmer}
for $p\le 31$, respectively and we shall use them without reference. For given $k, N$ and $j$ with
$1\le j<k$, we put
\begin{align*}
M_j(N, k)=\prod^{k-1}_{i=0}(N-j+i).
\end{align*}
Let
\begin{align*}
\cN_j(k):=\{N\in N_1(41): P(M_j(N, k))\le 59\}.
\end{align*}
By observing that
\begin{align*}
M_1(N, k+1)=M_1(N, k)(N-1+k), \ M_k(N, k+1)=(N-k)M_{k-1}(N, k)
\end{align*}
and
\begin{align*}
M_j(N, k+1)=M_j(N, k)(N-j+k)=(N-j)M_{j-1}(N, k) \  \ {\rm for} \ 1<j<k,
\end{align*}
we can compute $\cN_j(k)$ recursively as follows. Recall that $P(N(N-1))\le 41$ for $N\in N_1(41)$.
Hence we have
\begin{align*}
\cN_1(3)=\{N\in N_1(41): P(N+1)\le 59\}, \ \cN_2(3)=\{N\in N_1(41): P(N-2)\le 59\}.
\end{align*}
For $k\ge 3$ and $1\le j\le k$, we obtain $\cN_j(k+1)$ recursively by
\begin{align*}
\cN_1(k+1)=\{N\in \cN_1(k): P(N-1+k)\le 59\}, \
\cN_k(k+1)=\{N\in \cN_{k-1}(k): P(N-k)\le 59\}
\end{align*}
and
\begin{align*}
\cN_j(k+1)=\{N\in \cN_j(k): P(N-j+k)\le 59\}\cup \{N\in \cN_{j-1}(k): P(N-j)\le 59\} \ {\rm for} \ 1<j<k.
\end{align*}

\section{Proof of Theorems \ref{a3k} and \ref{a<50} for $k<10$}\label{a<50Pf}

Let $k=2$. Then $a\le 40$. By Corollary \ref{Ircor} $(i)$, we first restrict to those $m$
for which $P(m(m+1))\le 41$. They are given by $m=N-1$ with $N\in N_1(41)$. By \emph{Procedure $\mathcal{R}$},
we obtain the tuples $(n, 2, a)$ given in the following table.
\begin{center}
\begin{tabular}{|c|c||c|c||c|c|} \hline
$a$ & $n+a$ & $a$ & $n+a$ & $a$ & $n+a$ \\ \hline
$4, 5$ & $9$ & $4$ & $10$ & $5, 6$ & $28, 49, 64$ \\ \hline
$4, 8, 9$ & $16, 25, 81$ & $9$ & $33, 45, 55, 100, 121, 243$ & $10$ & $33, 243$ \\ \hline
$12$ & $27, 28, 49, 64, 91, 169, 729$ & $13$ & $21, 25, 28, 36, 50, 64$ & $14$ & $25$\\ \hline
$13, 14$ & $81, 126, 225, 2401, 4375$ & $15, 16$ & $289$ & $17$ & $513$ \\
$19, 33$ & & & & & \\ \hline
$18$ & $25, 76, 81, 96, 361, 513, 1216$ & $19$ & $25, 28, 36,  49, 50, 64, 243$ & $20$ & $28, 33, 49, 64, 243$ \\ \hline
$21$ & $25, 33, 45, 55, 529$ & $21, 22$ & $46, 81, 100, 121, 576$ & $23$ & $81$ \\ \hline
$24$ & $40, 81, 65, 325, 625, 676$ & $26$ & $49, 64$ & $27$ & $49, 64, 784$ \\ \hline
$28$ & $81, 145$ & $29$ & $81, 125, 961$ & $31$ & $243$ \\ \hline
$32$ & $243, 289, 1089$ & $33$ & $49, 50, 51, 64, 85,$ & $34$ & $49, 50, 64, 81$ \\
& & & $136, 256, 289, 5832$ & & \\ \hline
$36$ & $1369$ & $38$ & $65, 81, 325, 625, 676$ & $39$ & $81, 82, 1025, 6561$ \\ \hline
$40$ & $49, 64, 82, 288$ & & & & \\ \hline
\end{tabular}
\end{center}

Let $3\le k\le 9$. Then $10<a\le 50$ if $k=3, 4$ and $30<a\le 50$ if $5\le k\le 9$. Thus we may
assume that $P(\Delta(m, k))\le 59$ by Corollary \ref{Ircor} $(i)$.

Let $m\le 10000$. We need to consider $[k, 59]\cup \cM(k)$ where
$\cM(k)=\{60\le m\le 10000: P(\Delta(m, k))\le 59\}$.
We compute $\cM(3)$ and further from the identity $\Delta(m, k+1)=(m+k)\Delta(m, k)$, we obtain
$\cM(k+1)=\{m\in \cM(k): P(m+k)\le 59\}$ for $k\ge 3$ recursively. In fact we get
\begin{align*}
\cM(6)=\{90, 91, 116, 184, 185, 285, 340\}, \ \ \cM(7)=\{90, 184\}
\end{align*}
and $\cM(8)=\cM(9)=\emptyset$.
We now apply Procedure $\mathcal{R}$ on $m\in [k, 59]\cup \cM(k)$. We get
\begin{center}
\begin{tabular}{|c|c||c|c|} \hline
$a$ & $n+a$ & $a$ & $n+a$ \\ \hline
$11$ & $28$ & $12$ & $26, 27, 28, 65$ \\ \hline
$19, 20$ & $56, 100$ & $20$ & $46, 162$ \\ \hline
$21$ & $46$ & $32$ & $51, 56, 100, 121$ \\ \hline
$33$ & $51$ & $38, 39$ & $82$ \\ \hline
$41, 43$ & $56, 100$ & $43, 44, 45$ & $162$ \\ \hline
\end{tabular}
\end{center}
or $a\in \{12, 13, 18, 19, 20, 27, 32, 33, 34, 39, 41, 43, 44\}$, $n+a=50$ if $k=3$ and
\begin{center}
\begin{tabular}{|c|c||c|c||c|c||c|c|} \hline
$a$ & $n+a$ & $a$ & $n+a$ & $a$ & $n+a$ & $a$ & $n+a$ \\ \hline
$11, 12$ & $27, 28$ & $13, 31, 32, 33$ & $51$ & $18$ & $57$  & $10$ & $66$ \\ \hline
\end{tabular}
\end{center}
if $k=4$.

Thus $m>10000$. Suppose that $m+j=N\in N_1(41)$ for some $1\le j<k$. Then $\Delta(m, k)=M_j(N, k)$
which implies $N\in \cN_j(k)$ since $P(\Delta(m, k))\le 59$. Let
$\cN'_j(k)=\{m\in \cN_j(k): m>10000\}$. We find that
\begin{align*}
&\cN'_1(3)=\{13311, 13455, 17576, 17577, 19551, 29601, 32799, 212381\}\\
&\cN'_2(3)=\{10881, 11662, 13312, 13456, 13690, 16170, 17577, 23375, 27456, 31213,
134850, 212382, 1205646\}\\
&\cN'_1(4)=\{17576\}, \ \cN'_2(4)=\{17577\}, \ \cN'_3(4)=\{10881\}
\end{align*}
and $\cN'_j(k)=\emptyset$ for $k\ge 5$ and $1\le j<k$. We now take $m=N-j$ with $N\in \cN_j(k)$
for $1\le j<k$ and apply Procedure $\mathcal{R}$ to find that there are no triplets $(n, k, a)$.

Thus we may suppose that $m+j\notin N_1(41)$ for all $1\le j<k$. Then $P((m+i)(m+i+1))>41$ for each
$0\le i<k-1$. By Corollary \ref{Ircor} $(i)$, we may suppose that $P(\Delta (m, k))\le 53$ for
$k\le 8$ and $P(\Delta (m, k))\le 59$ for $k=9$. Taking
$V(m, k)=\{P((m+2i)(m+2i+1)): 0\le i<\frac{k}{2}\}$, we have
$V(m, k)\subseteq \{43, 47, 53\}$ for $4\le k\le 7$ and $V(m, k)=\{43, 47, 53, 59\}$ if $k=8, 9$. Then
$k\neq 8$ and computing $\{a\le 50: a\in \B\{Q_1, Q_2\}$ for
$(Q_1, Q_2)\in \{(47, 43), (53, 43, (53, 53)\}\}$ if $k=4, 5$; $(Q_1, Q_2)=(53, 43)$ if $k=6, 7, 9$,
we find that the set is empty except when $k=5, (Q_1, Q_2)=(43, 47)$ where it is $\{42\}$.
Thus we may assume that $k=5$ and $a=42$. Further $P(\Delta (m, k))=47$ and $43|\Delta (m, k)$. If
$p|\Delta (m, k)$ with $13\le p\le 41$, then $42\notin \B\{47, p\}$ by Corollary \ref{Ircor} $(iii)$.
Thus we may further suppose that $p|\Delta (m, k)$ with
$p\le 11$ or $p\in \{43, 47\}$. Also $P(m)\le 41$ otherwise each of
$P(m), P((m+1)(m+2)), P((m+3)(m+4))$ is $>41$ which is not possible.
Again we get $P(m+2)\le 41$ since otherwise each of $P(m(m+1)), P(m+2), P((m+3)(m+4))$ is $>41$.
Therefore $P(m(m+2))\le 41$ implying $P(m(m+2))\le 11$. If $m$ is odd, then $m=N-2$ for
$N\in N_2(11)$ and we check that there is a prime $p>11, p\notin \{43, 47\}$ with $p|\Delta (m, k)$
which is a contradiction. Thus $m$ is even and we have $P(\frac{m}{2}(\frac{m}{2}+1))\le 11$ implying
$m=2N-2$ with $N\in N_1(11)$. This is again not possible as above.

Let $k=3$. Then $P(\Delta(m, k))\le 53$ by Corollary \ref{Ircor} $(i)$. Recall that
$P_1>P_2>\cdots \geq k+2$ are all the primes dividing $\Delta(m, k)$. We observe that
$P_1>41$ since $m+j\notin N_1(41)$ for $1\le j<k$. Further $P((m+1)(m+2))>41$ if $P(m)>41$ and
$P(m(m+1))>41$ if $P(m+2)>41$ which are excluded by Corollary \ref{Ircor} $(iii)$ as
above. Thus we may suppose that $P_1=P(m+1)>41$ and $P(m(m+2))\le 41$. If $m$ is even,
then $m=2N-2$ for $N\in N_1(41)$ and we check that either $P_1>53$ or $a>50$ for
$a\in \B\{P_1, P_2, \ldots \}$. Thus $m$ is odd. If $P(m(m+2))\le 31$, then
$m=N-2$ with $N\in N_2(31)$ and we check that either $P_1>53$ or $a>50$ for
$a\in \B\{P_1, P_2, \ldots \}$ which is excluded. Thus $P_2=P(m(m+2))\in \{37, 41\}$ which
together with $41<P_1\le 53$ implies $a>50$ for $a\in \B\{P_1, P_2\}$ except when
$P_1=43, P_2=41$ where $a=40\in \B\{P_1, P_2\}$. Thus $a=40, P(m+1)=43$ and
$P(m(m+2))=41$. Further by Corollary \ref{Ircor} $(iii)$, we may assume
$p\in \{2, 3, 7, 41, 43\}$ for $p|\Delta(m, 3)$ and $2\cdot 43|(m+1)$.  By looking
at the possible prime factorisations of $m, m+1, m+2$ and taking $(m+2)-m$ or
$m-(m+2)$, we have the following possibilities.
\begin{align*}
\begin{array}{cll}
&m+1=2^r\cdot 7^y\cdot 43^t, \ & 3^x-41^z=\pm 2;\\
&m+1=2^r\cdot 3^x\cdot 43^t, \ & 7^y-41^z=\pm 2;\\
&m+1=2^r\cdot 43^t, \ \ & 3^x-41^z=\pm 2;\\
&m+1=2^r\cdot 43^t, \ \ & 3^x\cdot 7^y-41^z=\pm 2;\\
&m+1=2^r\cdot 43^t, \ \ & 3^x-7^y\cdot 41^z=\pm 2;\\
&m+1=2^r\cdot 43^t, \ \ & 7^y-3^x\cdot 41^z=\pm 2;\\
\end{array}
\end{align*}
where $r, x, y, z, t$ are positive integers. The second and fourth equations are
excluded by taking remainders modulo $7$. Calculating  modulo $8$ for the
remaining possibilities, we get the following four simultaneuos equations.
\begin{align*}
\begin{array}{|clll|}\hline
C1: &3^x-41^z=2, \ & 3^x-2^r\cdot 7^y\cdot 43^t=1, \ & 2^r\cdot 7^y\cdot 43^t-41^z=1, \ x \ {\rm odd}\\ \hline
C2: &3^x-41^z=2, \ & 3^x-2^r\cdot 43^t=1, \ & 2^r\cdot 43^t-41^z=1, \ x \ {\rm odd}\\ \hline
C3: &3^x-7^y\cdot 41^z=2, \ & 3^x-2^r\cdot 43^t=1, \ & 2^r\cdot 43^t-7^y\cdot 41^z=1\\ \hline
C4: &3^x\cdot 41^z-7^y=2,  \ &  3^x\cdot 41^z-2^r\cdot 43^t=1, & 2^r\cdot 43^t-7^y=1\\ \hline
\end{array}
\end{align*}
If $4|2^r$ in $C2$, we get a contradiction by taking remainders modulo $4$ since $x$ is odd,
thus $2^r=2$. Calculating modulo $7$ in all the possibilities, we find that $C1$ is excluded
since $x$ is odd. Further $6|(x-1)$ in $C2$; $6|(x-2)$, $3|r$ in $C3$ and $3|r$ in $C4$.
Note that $x\ge 2$. Taking remainders modulo $9$ again, we find that $3|(z+1)$ in $C2$; $3|t$ in
$C3$ and $3|t, 3|(y-1)$ in $C4$. Thus we have
$(-41^{\frac{z+1}{3}})^3+3\cdot 41(3^{\frac{x-1}{3}})^3=2\cdot 41$
in $C2$, $(-2^{\frac{r}{3}}\cdot 43^{\frac{t}{3}})^3+9(3^{\frac{x-2}{3}})^3=1$ in $C3$ and
$(2^{\frac{r}{3}}\cdot 43^{\frac{t}{3}})^3+7(-7^{\frac{y-1}{3}})^3=1$ in $C4$. We solve the
Thue equations $X^3+123Y^3=82, X^3+9Y^3=1$ and $X^3+7Y^3=1$ with $X, Y$ integers in {\bf PariGp}
to find that it is not possible.

We recall that Theorem \ref{a<50} follows from Theorem \ref{a3k} when $k\ge 10$. Therefore we prove Theorem \ref{a3k}
with $k\ge 10$ in Sections 7, 8 and this will complete the proofs of Theorems \ref{a3k} and \ref{a<50}.

\section{Proof of Theorem \ref{a3k} for $k\ge 10$}\label{k<5000I}

We may suppose by Corollary \ref{Ircor} $(i)$ that $P(\Delta(m, k))\le a+k\le 6k$.
Let $k\le 17$. We may suppose that $\max(30, 1.5k)<a\le 5k$. First assume that $m+j\notin N_1(41)$
for any $1\le j<k$. Let
\begin{align*}
\mathfrak{L}_i(k, a):=\{p: \max(41, \frac{a}{i})<p\le \frac{a+k}{i}\} \ \ {\rm for} \ \ 1\le i\le 5
\end{align*}
and $\ell(k):=\underset{1.5k<a\le 5k}{\max}|\cup^5_{i=1}\mathfrak{L}_i(k, a)|$. There are at most
$\ell(k)$ primes $>41$ dividing $\Delta(a+1, k)$ and we delete numbers in $\{m, m+1, \cdots, m+k-1\}$
divisible by those primes. We are left with at least $k-\ell(k)$ numbers. We observe that the prime
factors of each of these numbers are at most $41$ otherwise the assertion follows by
Lemma \ref{irupdate}. We call $U$ the largest such number. From \cite[Tables IA]{lehmer}, we may assume
that each of these numbers is at least at a distance $2$ from the preceding one. Thus $m+k-1\ge U\ge m+2(k-\ell(k)-1)$.
Hence we have a contradiction if $k-2\ell(k)-1>0$. This is the case since $\ell(k)=2, 3, 4, 5$ when
$k=10, k\in \{11, 12\}$, $k\in \{13, 14\}$, $k\in \{15, 16, 17\}$, respectively. Therefore we suppose that
$m+j_0=N\in N_1(41)$ for some $1\le j_0\le k-1$. Then $\Delta(m, k)=M_{j_0}(N, k)$.  We check that $P(M_j(N, 7))>102$
for $1\le j<7$ when $N>10000$ and $N\in N_1(41)$. Thus $m<N\le 10000$. For each $m<10000$, we check that
$P(\Delta(m, 10))>102$ for $m\ge 118$. Therefore $P(\Delta(m, k))>6k$ when $m\ge 118$. Further we
find that $p_{i+1}-p_i\le 10$ for $p_i<118$. Hence for $m<118$, $P(\Delta(m, k))\ge m$ since $k\ge 10$.
Therefore we have $P(\Delta(m, k))\ge \min(m, 6k+1)>k+a$ for all $m$. Now the assertion follows by Corollary \ref{Ircor} $(i)$.

Thus $k\ge 18$. First we check that $\om_1<k$ for $k\le 100$ which together with
\eqref{om2} and Corollary \ref{<4k8000} implies $\om_1<k$ for all $k$. Suppose $m\le 10^{10}$.
If at least one of $m, m+1, \ldots, m+k-1$ is a prime, then $P(\Delta(m, k))\ge m>k+a$ and
therefore the assertion follows from Corollary \ref{Ircor} $(i)$. Hence we may suppose that each of
$m, m+1, \ldots, m+k-1$ is composite. By Lemma \ref{grim}, we obtain $\om(\Delta(m, k))\ge k>\om_1$
which contradicts \eqref{1+om1}. Therefore we have $m>10^{10}$ which implies $k>500$ by
\eqref{1+om1} and \eqref{Lbd} with $t_0=\om_1$.

By \eqref{1+om1} and \eqref{om2}, we have $\om(\Delta (m, k))\le \om_2$.
We obtain from \eqref{E0}, Lemma \ref{pix} $(vi)$ and $k>500$ that
\begin{align}\label{E1}
m^{k-\om_2}<(k-1)!=\frac{k!}{k}\le \frac{\sqrt{2\pi k}}{k} \left(\frac{k}{e}\right)^k
e^{\frac{1}{12k}}<\left(\frac{k}{e}\right)^k.
\end{align}
Since $m\ge 10^{10}$, we get
\begin{align*}
\log k -1>\frac{(k-\om_2)\log m}{k}\ge 10(\log 10)(1-\frac{\om_2}{k}).
\end{align*}
By using estimates of $\pi(\nu)$ from Lemma \ref{pix} $(i)$, we obtain
\begin{align*}
k>e^{\left(1+10(\log 10)\left(1-\frac{\frac{6}{5}}{\log \frac{6k}{5}}
\left(1+\frac{1.2762}{\log \frac{6k}{5}}\right)-
\sum^4_{j=1}\frac{1}{j\log \frac{k}{j}}\left(1+\frac{1.2762}{\log \frac{k}{j}}\right)\right)\right)}=:J(k)
\end{align*}
Since $J(k)$ is an increasing function of $k$ and $k>500$, we have $k>J(500)\ge 4581$. Further
$k>J(4581)\ge 578802$ and hence $k>J(578802)>4.5\times 10^7$.
Let $m\le 123k$. Then, by Corollary \ref{n<<}, there is a
prime $P_1\ge m$ such that $P_1|\Delta(m, k)$. Since $m>a+k$, the assertion follows
by Corollary \ref{Ircor} $(i)$. Therefore we may suppose that $m>123k$.

Assume that $m+k-1\ge k^{\frac{3}{2}}$. Then $m>\frac{k^{\frac{3}{2}}}{e}$ and we get
from \eqref{E1} and Corollary \ref{<4k8000} that
\begin{align*}
k^k>(k^{\frac{3}{2}})^{k-\pi(4k)}
\end{align*}
which together with estimates of $\pi(\nu)$ from Lemma \ref{pix} implies
\begin{align*}
0>\frac{k-3\pi(4k)}{k}\ge 1-\frac{12}{\log 4k}
\left(1+\frac{1.2762}{\log 4k}\right).
\end{align*}
The right hand expression is an increasing function of $k$ and the inequality does not
hold at $k=10^6$. Therefore $m+k-1<k^{\frac{3}{2}}$. By Lemma \ref{k3/2}, we get
\begin{align*}
\binom{m+k-1}{k}\leq (2.83)^{k+k^{\frac{3}{4}}}k^{\frac{3}{2}(\pi(4k)-\pi(k))}
\end{align*}
since $|\{i: P(m+i)\le k\}|\ge k-(\pi(4k)-\pi(k))$ by \eqref{piDk} and Corollary \ref{<4k8000}.
On the other hand, we have $m>123k$ implying
\begin{align*}
\binom{m+k-1}{k}\geq \binom{124k}{k}=\frac{(124k)!}{k!(123k)!}
&>\frac{\sqrt{2\pi (124k)}(\frac{124k}{e})^{124k}}
{\sqrt{2\pi k}(\frac{k}{e})^k{\rm e}^{\frac{1}{12k}}
\sqrt{2\pi (123k)}(\frac{123k}{e})^{123k}{\rm e}^{\frac{1}{12\cdot 123k}}}\\
&>\frac{0.4}{\sqrt{k}}e^{-\frac{1}{8k}}(335.7)^k
\end{align*}
using estimates of $\nu!$ from Lemma \ref{pix}. Comparing the upper and lower bounds, we obtain
\begin{align*}
0>\log (0.4)-\frac{1}{8k}-0.5\log k+k\log (\frac{335.7}{2.83})-k^{\frac{3}{4}}\log (2.83)
-\frac{3}{2}(\pi(4k)-\pi(k))\log k.
\end{align*}
By using estimates of $\pi(\nu)$ from Lemma \ref{pix} again, we obtain
\begin{align*}
\frac{(\pi(4k)-\pi(k))\log k}{k}&\le \frac{4\log k}{\log 4k}(1+\frac{1.2762}{\log 4k})-\frac{\log k}{\log k-1}\\
&\le 4\left(1-\frac{\log 4}{\log 4k}\right)\left(1+\frac{1.2762}{\log 4k}\right)-1\\
&\le 4\left(1-\frac{\log 4 -1.2762}{\log 4k}\right)-1\le 3.
\end{align*}
Therefore we have
\begin{align*}
0>\frac{\log (0.4)-\frac{1}{8k}-0.5\log k}{k}+\log (\frac{335.7}{2.83})-k^{-\frac{1}{4}}\log (2.83)-4.5.
\end{align*}
The right hand side of the above inequality is an increasing function of $k$ and the inequality is
not valid at $k=10^6$. This is a contradiction.
\qed

\section{Proof of Theorem \ref{a12}}\label{Proofa12}

By Theorem \ref{a<50}, we restrict to those triplets $(n, a, k)$ given in
the statement of Theorem \ref{a<50} with $a\le 12$. We now factorize $f_{n, a}(x)$ with
$a_0a_n=\pm 1, a_1=a_2=\ldots =a_{n-1}=1$ to find that these $f_{n, a}(x)$ are irreducible.
Hence the assertion follows.
\qed

\section{Proof of Theorem \ref{Laga40}}\label{ProofLa}

For the proof of Theorem \ref{Laga40}, we put $\alpha =a$ throughout this section. As remarked in
Section 1 after the statement of Theorem \ref{Laga40}, we may assume
that $10<a\le 40$. For $n\le 18$ and $n\in \{24, 25, 27, 30, 32, 36, 45, 48, 54, 60, 64, 72, 75, 80, 90, 112, 120\}$,
we find that $L^{(a)}_n(x)$ is irreducible except for $(n, a)$ listed in Theorem \ref{Laga40}.
Thus we assume $n>18$, $n\notin \{24, 25, 27, 30, 32, 36, 45, 48, 54, 60, \\ 64, 72, 75, 80, 90, 112, 120\}$.
Assume that $L^{(\al)}_n(x)$ is reducible. Then $L^{(\al)}_n(x)$ has a factor of degree $k$ with
$1\le k\le \frac{n}{2}$. First we prove the following lemma.

\begin{lemma}\label{2klag}
Let $k\ge 2$. Then $L^{(a)}_n(x)$ has no factor of degree $k$.
\end{lemma}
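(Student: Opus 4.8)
The plan is to use that $L_n^{(a)}(x)$ is nothing but the generalized Schur polynomial $f_{n,a}(x)$ with coefficients $a_j=(-1)^j\binom{n}{j}$, so that $|a_0|=|a_n|=1$ because $\binom{n}{0}=\binom{n}{n}=1$. Hence Theorem \ref{a<50} applies directly: under the standing hypothesis $10<a\le 40$ of this section we have $a\le 40$ when $k=2$ and $a\le 50$ when $k\ge 3$, so Theorem \ref{a<50} shows that $L_n^{(a)}(x)$ has no factor of degree $k$ unless $(n,k,a)$ belongs to one of the exceptional families \eqref{st3/2}, \eqref{st1030}, \eqref{exc5k}, or the supplementary $k=2$ list of Theorem \ref{a<50}. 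Since none of these families contains a triple with $k\ge 6$, the case $k\ge 6$ is immediate, and it remains to eliminate the finitely many exceptional triples with $k\in\{2,3,4,5\}$ under the extra hypotheses $n>18$ and $n\notin\{24,25,27,30,32,36,45,48,54,60,64,72,75,80,90,112,120\}$ imposed at the start of the section.

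First I would impose $a>10$, $n>18$ and the excluded-$n$ condition on the families with $k\ge 3$. Every triple of \eqref{st3/2} has $a\le 4$ and is discarded; the same filtering reduces \eqref{st1030} to $(40,5,12)$ and the $k\ge 3$ part of \eqref{exc5k} to $(53,3,12)$, $(38,4,13)$ and $(39,4,18)$, while its $k=2$ part has $a\le 10$ throughout and vanishes. For these four surviving triples the decisive point is that the Laguerre criterion \eqref{Lagcond} of Lemma \ref{irupdate}$(B)$ is allowed to use primes dividing $\prod_{i=1}^{k}(n-k+i)$, a factor absent from the plain $f$-criterion. In each case such a prime $p\ge k+2$ with $p\nmid\prod_{i=1}^{k}(a+i)$ and $p\ge\min(2u_0,k+u_0)$ is produced at once --- for example $p=17$ for $(53,3,12)$ and $p=37$ for each of $(40,5,12)$, $(38,4,13)$, $(39,4,18)$ --- so Lemma \ref{irupdate}$(B)$ forbids a factor of degree $k$.

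The bulk of the argument is the case $k=2$, whose exceptions in Theorem \ref{a<50} form a long list: all $(n,a)$ with $n+a\le 100$, the sporadic pairs recorded in the table there, and the values with $a\in\{13,14,19,33\}$ and $n+a\in\{126,225,2401,4375\}$. After restricting to $10<a\le 40$, $n>18$ and the excluded-$n$ condition this is still a large but finite set, and I would clear it by the same device, now using the additional factor $(n-1)n$ available in \eqref{Lagcond}. Corollary \ref{ircor3}, applied with $p=5$ on $\A_{2,5}$ and with $p=7$ on $\A_{2,7}$, together with Lemma \ref{irupdate}$(B)$, eliminates all but a few residual triples, and those are disposed of by directly factoring $L_n^{(a)}(x)$ and observing it has no quadratic factor.

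The hard part is precisely this $k=2$ verification. Unlike the $k\ge 3$ triples it is not a one-line check but a structured finite computation over the remaining candidates, and everything turns on the fact that the Laguerre-specific factor $\prod_{i=1}^{k}(n-k+i)$ in \eqref{Lagcond} furnishes a prime satisfying the size condition in every case where the ordinary $f$-criterion of Theorem \ref{a<50} fell short; the hypotheses $n>18$ and $n\notin\{24,\ldots,120\}$ are exactly what make this factor large enough to contain such a prime, so that no genuine factor of degree $k\ge 2$ survives.
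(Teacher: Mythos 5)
Your reduction of the lemma to the exceptional triples of Theorem \ref{a<50}, the filtering by $a>10$, $n>18$ and the excluded values of $n$, and the elimination of the survivors via Lemma \ref{irupdate}$(B)$ with a prime dividing the Laguerre-specific factor $\prod_{i=1}^{k}(n-k+i)$, followed by direct factorization of a few residual $k=2$ pairs, is essentially the paper's own argument for the range it covers; your explicit primes ($p=17$ for $(53,3,12)$ and $p=37$ for $(40,5,12)$, $(38,4,13)$, $(39,4,18)$) do satisfy \eqref{Lagcond}, \eqref{a+1k} and \eqref{2u0}. But there is a genuine gap: you never treat $k=2$ with $40<a\le 50$. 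The sentence ``we may assume that $10<a\le 40$'' at the head of Section \ref{ProofLa} is a slip in the paper: Theorem \ref{Laga40} asserts irreducibility for all $0\le \al\le 50$, and the remark in Section 1 only justifies $\al>10$; indeed the $k=1$ analysis later in that section handles $a=47,48,49$, and the paper's own proof of Lemma \ref{2klag} contains a whole second paragraph beginning ``Let $k=2$ and $40<a\le 50$.'' On that range Theorem \ref{a<50} is silent (its $k=2$ part stops at $a=40$), so citing it proves nothing there, and the case cannot be absorbed into your finite check of exceptional triples.

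The missing case needs a different idea, which is why the paper devotes half of its proof to it. For $k=2$ and $40<a\le 50$ one argues: if $n\notin N_1(23)$ and $n+a\notin N_1(23)$, then $P_1=P(n(n-1))>23$ and $P_2=P((n+a)(n+a-1))>23$, and at least one of $P_1, P_2$ does not divide $(a+1)(a+2)$, so Lemma \ref{irupdate}$(B)$ applies; otherwise $n$ or $n+a$ lies in Lehmer's finite list $N_1(23)$, and one checks for $N\in N_1(23)$, $N>10000$ that $P((N\pm a)(N\pm a-1))>P((a+1)(a+2))$ apart from $(a,N)\in\{(45,10648),(46,12168)\}$ (which are killed by taking $p=P(N(N-1))$), while $n\le 10000$ is settled by a direct search for a suitable prime $p>7$ dividing $n(n-1)(n+a)(n+a-1)$ with $p\nmid (a+1)(a+2)$. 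Without this argument, or some substitute covering $k=2$, $40<a\le 50$, the lemma you have proved is too weak to support the rest of Section \ref{ProofLa}: the deduction that a reducible $L^{(\al)}_n(x)$ must have a \emph{linear} factor would fail for $40<\al\le 50$.
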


\begin{proof}
Let $k\ge 2$ and $a\le 40$ if $k=2$. We may restrict to those $(n, k, a)$ given in the list of
exceptions in Theorem \ref{a<50}. For each of these triplets $(n, k, a)$, we first check
if there is a prime $p\ge k+2$ with \eqref{Lagcond} such that either \eqref{2u0} or \eqref{2k} is
satisfied and they can be excluded by Lemma \ref{irupdate}. We are now left with triples $(n, k, a)$
given by $k=2, (n, a)\in \{(100, 21), (40, 24), (256, 33), (42, 40)\}$. For these $(n, a)$, we
check that $L^{(a)}_n(x)$ is irreducible.

Let $k=2$ and $40<a\le 50$. Suppose $n\notin N_1(23)$ and $n+a\notin N_1(23)$. Then $P_1=P(n(n-1))>23$
and $P_2=P((n+a)(n+a-1))>23$. Further either $P_1\nmid (a+1)(a+2)$ or $P_2\nmid (a+1)(a+2)$ and
then the assertion follows by Lemma \ref{irupdate}. Therefore we may assume that either $n=N\in N_1(23)$
or $n+a=N\in N_1(23)$. Further we may also  suppose that $P(n(n-1)(n+a)(n+a-1))\le P((a+1)(a+2))$
since otherwise the assertion follows by Lemma \ref{irupdate}.
For $N\in N_1(23)$ and $N>10000$, we check that $P((N-a)(N-a-1))>P((a+1)(a+2))$ and
$P((N+a)(N+a-1))>P((a+1)(a+2))$ except when $(a, N)\in \{(45, 10648), (46, 12168)\}$ where
$P(N(N-1))\in \{13, 23\}$, respectively. Observe that $N(N-1)|n(n-1)(n+a)(n+a-1)$. By taking
$p=P(N(N-1))$, the assertion follows from Lemma \ref{irupdate}. We now consider $n\le 10000$.
Let $a$ be given. By Lemma \ref{irupdate}, we first restrict to those $n$ for which
$P(n(n-1)(n+a)(n+a-1))\le P((a+1)(a+2))$. Further we check that there is a prime
$p|n(n-1)(n+a)(n+a-1), p>7$ and $p\nmid (a+1)(a+2)$. Lemma \ref{irupdate} implies the assertion now.
\end{proof}

By Lemma \ref{2klag}, we only need to consider $k=1$. If there is a prime $p|n(n+a), p\nmid (a+1)$ with either
$p\ge 11$ or $p=7, a\neq 47$ or $p=5, a\notin \{23, 48\}$ or $p=3, a\notin \{16, 24, 25, 34, 43\}=:S_1$, then
the assertion follows by Lemma \ref{irupdate} and Corollary \ref{ircor3}. Let $P_a=\{2\}\cup \{p: p|(a+1)\}$ if
$a\notin S_1\cup \{23, 47, 48\}$, $P_a=\{2, 3\}\cup \{p: p|(a+1)\}$ if $a\in S_1$,  $P_a=\{2, 3, 5\}$ if $a=23$,
$P_a=\{2, 3, 7\}$ if $a=47$ and $P_a=\{2, 5, 7\}$ if $a=48$. Thus for a given $a$, we may assume that $p|n(n+a)$
implies $p\in P_a$.

Let $a$ be given. Let $p|n$ with $p>2$. Then $p\in P_a$. As in the proof of
Lemma \ref{irupdate}, if we have $\phi'_j<1$ for all $1\le j\le n$,
then $L^{(\al)}_n(x)$ does not have a linear factor and we are done.
Let $1\le j\le 50$. We compute $\phi_j$ to find that $\phi_j<1$ for $j>1$ except when
$(p, a)\in T_1:=\{(3, 16), (3, 17), (3, 34), (3, 35), (3, 43), (3, 44), (5, 23), (5, 24),
(5, 48), (5, 49), (7, 47), (7, 48)\}$ where $\phi_j<1$ for $j>2$ and except when
$23\le a\le 26, p=3$ where $\phi_j<1$ for $j>4$. Let $j>50$. By using ord$_p(s!)\le \frac{s}{p-1}$,
we find that
\begin{align*}
\phi_j=\frac{{\rm ord}_p((a+j)!)-{\rm ord}_p(a!)}{j}
\le \frac{\frac{a+j}{p-1}-{\rm ord}_p(a!)}{j}\le \frac{1}{p-1}+\frac{\frac{a}{p-1}-{\rm ord}_p(a!)}{51}<1.
\end{align*}
It suffices to show that $\phi'_1<1$ except when $(p, a)\in T_1$ for which we need
to show $\phi'_j<1$, $1\le j\le 2$ and except when $23\le a\le 26, p=3$ for which
we need to show $\phi'_j<1$ for $1\le j\le 4$. Let $\phi'_0=\max\{\phi'_i\}$ for $1\le i\le 4$.
It suffices to show $\phi'_0<1$ is always valid. This is the case except when
$a\in \{24, 49\}, p=5$; $a\in \{17, 24, 25, 26, 35, 44\}, p=3$ and $a=48, p=7$.
Further ord$_5(n)\le 1$ when $a\in \{24, 49\}$,  ord$_7(n)\le 1$ when $a=48$, ord$_3(n)\le 1$
when $a\in \{17, 24, 25, 35, 44\}$ and ord$_3(n)\le 2$ when $a=26$
otherwise $\phi'_0<1$. Let $a\in \{17, 26, 35\}$ and ord$_3(n)=1$ or ord$_3(n)=2$.
Then from $n(n+a)=2^{\al}3^{\be_3}$ and \emph{gcd}$(n, n+a)\le 2$, we obtain
$n\in \{3, 6, 9, 18\}$ which is not possible. Let $a=49$ and ord$_5(n)=1$. Then
from $n(n+a)=2^{\al}5^{\be_5}$ and gcd$(n, n+a)=1$, we obtain $n=5$ which
is again not possible. Here gcd$(a, b)$ stands for greatest common divisor of
$a$ and $b$.

Therefore $n$ is a power of $2$ except when $a=24$ where ord$_3(n)\le 1$ or
ord$_5(n)\le 1$; $a=25$ where ord$_3(n)\le 1$; $a=44$ where ord$_3(n)\le 1$ and
$a=48$ where ord$_7(n)\le 1$. From the definition of $P_a$, we observe that
$n(n+a)$ has at most two odd prime factors except when $a=34$ where it has
at most three odd prime factors. Hence we always have $n, n+a$ of the form
\begin{align}\label{nn+a}\begin{array}{lll}
n=2^{\al+\del}, \ \frac{n+a}{2^{\del}}=&p^{\be_p} \ &{\rm if} \ P_a=\{2, p\}\\
n=2^{\al+\del}, \ \frac{n+a}{2^{\del}}\in &\{p^{\be_{p_1}}_1, p^{\be_{p_2}}_2,
p^{\be_{p_1}}_1p^{\be_{p_2}}_2\} \ &{\rm if} \ P_a=\{2, p_1, p_2\}\\
n=2^{\al+\del}, \ \frac{n+a}{2^{\del}}\in &\{p^{\be_{p_1}}_1, p^{\be_{p_2}}_2,
p^{\be_{p_3}}_3, p^{\be_{p_1}}_1p^{\be_{p_2}}_2,   p^{\be_{p_1}}_1p^{\be_{p_3}}_3, & \\
&p^{\be_{p_2}}_2p^{\be_{p_3}}_3, p^{\be_{p_1}}_1p^{\be_{p_2}}_2p^{\be_{p_3}}_3\} \ &{\rm if} \
P_a=\{2, p_1, p_2, p_3\}.
\end{array}\end{align}
where $2^{\del}||a$ and in addition $n, n+a$ is of the form
\begin{align}\label{na35}\begin{array}{llll}
&n=15\cdot 2^{\al+3}, \ &n+a=8\cdot 3^{\be_3+1}  \ \ \ {\rm or} &\\
&n=3\cdot 2^{\al+3}, \ &n+a\in \{8\cdot 3^{\be_3+1}, 8\cdot 3^{\be_3+1}5^{\be_5}\}
\ \ &{\rm if} \ a=24\\
&n=3\cdot 2^{\al}, \ &n+a=13^{\be_{13}} \ \ &{\rm if} \ a=25\\
&n=3\cdot 2^{\al+2}, \ &n+a=4\cdot 5^{\be_{5}} \ \ &{\rm if} \ a=44\\
&n=7\cdot 2^{\al+4}, \ &n+a=16\cdot 5^{\be_{5}} \ \ &{\rm if} \ a=48.
\end{array}
\end{align}
Here all the exponents of odd prime powers appearing in \eqref{nn+a} and \eqref{na35} are positive. For $n<512$ and $n$ of the
form given by \eqref{nn+a} or \eqref{na35} which are given by $n\in \{96, 128, 192, 224, 240, 256, 384, 448, 480\}$, we
check that there is a prime $p|(n+a), p\notin P_a$ except when $(n, a)\in \{(256, 14), (128, 16), (256, 16), \\
(96, 24), (192, 24), (256, 32), (256, 33), (128, 34)\}$.
We find that for each of these $(n, a)$, the polynomial $L^{(a)}_n(x)$ is irreducible. Therefore we have $n\ge 512$.

From the equality $\frac{n+a}{2^{\del}}-\frac{n}{2^\del}=\frac{a}{2^{\del}}$, we obtain an equation of the form
\begin{align*}
p^{\be_p}-2^\al=\frac{a}{2^{\del}} \ \ {\rm or} \ \ p^{\be_{p_1}}_1p^{\be_{p_2}}_2-2^\al=\frac{a}{2^{\del}}
\end{align*}
or further $3^{\be_{3}}5^{\be_{5}}7^{\be_{7}}-2^\al=17$ (only when $a=34$) or $3^{\be_3}-5\cdot 2^\al=1$ (only when $a=24$)
or $13^{\be_{13}}-3\cdot 2^\al=25$ (only when $a=25$) or $5^{\be_{5}}-3\cdot 2^\al=11$ (only when $a=44$) or
$5^{\be_{5}}-7\cdot 2^\al=3$ (only when $a=48$).
In each of the equations thus obtained, we note that $8|2^\al$ since $n\ge 512$.
Out of all the equations, we need to consider only those which are valid under remainders modulo $8$
and hence we restrict to those. Here we use $p^{\be_p}\equiv 1$ or $p$ modulo $8$ according as $\be_p$ is even or odd, respectively.
They are now expressed as the Thue equation
\begin{align*}
X^3+AY^3 = B
\end{align*}
and we solve them in {\bf PariGp}.
For instance, let $a=32$. Then we obtain equations of the form $3^{\be_3} -2^\al=1$,
$11^{\be_{11}}-2^\al=1$, $3^{\be_3} 11^{\be_{11}}-2^\al=1$. By taking remainders modulo $8$, we find that
$\be_3, \be_{11}, \be_3+\be_{11}$ are even for the first, second and third equation, respectively.
This implies $3^{\frac{\be_3}{2}}-1=2, 3^{\frac{\be_3}{2}}+1=2^{\al-1}$ giving $3^{\be_3}=9, 2^{\al}=8$
for the first equation and $11^{\frac{\be_{11}}{2}}-1=2, 11^{\frac{\be_{11}}{2}}+1=2^{\al-1}$ giving
a contradiction for the second equation. Observe that $2^{\al}>8$ since $n\ge 512$. Thus we are left
with $3^{\be_3} 11^{\be_{11}}-2^\al=1$. For some $0\le r, s, t\le 2$, we have $\al+r, \be_3-s, \be_{11}-t$
all are multiples of $3$ and from $-2^{\al +r}+2^r3^s11^t3^{\be_3-s} 11^{\be_{11}-t}=2^r$, we obtain the
Thue equations $X^3+AY^3=B$ with $B=2^r, A=2^r3^s11^t, 0\le r, s, t\le 2$ and with $X$ a power of
$2$ and $33|AY$. There are $27$ possibilities of pairs $(A, B)$. If $A=1$, then $B=1$ and we factorise
$X^3+Y^3$ to get a contradiction. Thus the case $A=1$ is excluded. For all other values of $(A, B)$
than those given by $t=2$, we check in {\bf PariGp} that none of the solutions $(X, Y)$ of
Thue equations thus obtained
satisfy the condition $X$ a power of $2$ and $33|AY$ except when $A=66, B=2$ where $X=-4$ and $Y=1$ from
which we obtain $n=1024$. When $t=2$, from
$3^{\be_3-s+3}11^{\be_{11-2+3}}-2^{3-r}3^{3-s}\cdot 11 \cdot 2^{\al+r-3}=3^{3-s}\cdot 11$, we obtain
the Thue equations $X^3+AY^3=B$ with $B=3^{3-s}\cdot 11, A=2^{3-r}3^{3-s}\cdot 11, 0\le r, s\le 2$ and
$33|X$ and $Y$ a power of $2$. We check again in {\bf PariGp} that none of the solutions $(X, Y)$ of these
Thue equations thus satisfy the condition $33|X$ and $Y$ a power of $2$. Hence we need to consider
$n=1024$ when $a=32$. For another example, let $a=48$. We obtain
equations of the form $5^{\be_5} -2^\al=3$, $7^{\be_{7}}-2^\al=3$, $5^{\be_5} -7\cdot 2^\al=3$ and
$5^{\be_5}7^{\be_{7}}-2^\al=3$. The first three equations are excluded modulo $8$ and for the last equation, we
find that $\be_5, \be_7$ are both odd. Taking remainders modulo $7$ imply $3|(\al-2)$ or $3|(\al+1)$
and hence from the equation $-2^{\al+1}+2\cdot 5^{\be_5}7^{\be_{7}}=6$, we obtain the Thue equations
$X^3+AY^3=B$ with $B=6, A=2\cdot 5^s7^t, 0\le s, t\le 2$ and $X$ a power of $2$ and $70|AY$.
When $t=2$, from $5^{\be_5-s+3}7^{\be_7+1}-4\cdot 5^{3-s}\cdot 7\cdot 2^{\al-2}=3\cdot 5^{3-s}\cdot 7$, we
obtain the Thue equations $X^3+AY^3=B$ with $B=21\cdot 5^{3-s}, A=28\cdot 5^{3-s}, 0\le s\le 2$ and
$35|X$ and $Y$ a power of $2$. We check in {\bf PariGp} that all the solutions $(X, Y)$ of these Thue
equations are excluded except when $(A, B)=(70, 6)$ where $X=-4, Y=-1$ and we obtain $n=512$. Hence
we need to consider $n=512$ when $a=48$. Similarly, all other $a$'s are excluded except when
$a\in \{20, 24\}$ where we obtain $(n, a)\in \{(4096, 20), (1920, 24)\}$.

Thus we now exclude the cases $(n, a)\in \{(4096, 20), (1920, 24), (1024, 32), (512, 48)\}$. We take $p=2$
and show that $\phi'_j<1$ for all $1\le j\le n$. This is shown by checking ord$_2(\Delta_j)-$ord$_2(\binom{n}{j})<j$
for $j$ such that ord$_2(\Delta_j)\ge j$ for these pairs $(n, a)$. Hence they are all excluded.
\qed

\section*{Acknowledgments}

A part of this work was done when the second author was visiting Max-Planck Institute for Mathematics
in Bonn during Feburary-April, 2009 and he would like to thank the MPIM for the invitation and the
hospitality. We also thank the referee for his comments on an earlier draft.

\end{document}